\newtheorem{theorem}{Theorem}[section]
\newtheorem{proposition}[theorem]{Proposition}
\newtheorem{corollary}[theorem]{Corollary}
\newtheorem{lemma}[theorem]{Lemma}
\newtheorem{rmk}[theorem]{Remark}
\newtheorem*{theorem*}{Theorem}
\newtheorem*{corollary*}{Corollary}
\theoremstyle{definition}
\newtheorem{definition}[theorem]{Definition}
\newcommand{\al}{\omega_1}
\renewcommand{\P}{\mathbb{P}}
\newcommand{\Q}{\mathbb{Q}}
\newcommand{\seb}{\subseteq}
\newcommand{\setm}{\setminus}
\newcommand{\es}{\emptyset}
\newcommand{\tleq}{\sqsubseteq}
\newcommand{\res}{\restriction}
\newcommand{\tup}[1]{\langle#1\rangle}
\newcommand{\PFA}{\mathrm{PFA}}
\newcommand{\CH}{\mathrm{CH}}
\newcommand{\MRP}{\mathrm{MRP}}
\newcommand{\OGA}{\mathrm{OGA}}
\newcommand{\dt}[1]{\dot{#1}}
\newcommand{\forces}{\Vdash}
\newcommand{\ck}[1]{\check{#1}}
\DeclareMathOperator{\dom}{dom}
\DeclareMathOperator{\ran}{ran}
\DeclareMathOperator{\hht}{ht}
\title{New consequences of PFA($T^*$)}
\date{October 26, 2025}
\author{Carlos Martínez-Ranero}
\address[C. Martínez-Ranero]{Department of Mathematics, Universidad de Concepción, Concepción, Chile}
\email{cmartinezr@udec.cl}
\urladdr{www2.udec.cl/\textasciitilde cmartinezr}
\author{Lucas Polymeris}
\address[L. Polymeris]{Department of Mathematics, Universidad de Concepción, Concepción, Chile}
\email{l.polymeris@proton.me}
\thanks{\emph{MSC 2020}: 03E35, 03E65, 03E05}
\thanks{\emph{Keywords}: Aronszajn tree, MRP, OGA, Forcing Axiom}
\thanks{The first named author was partially supported by Proyecto VRID-Investigación  No. 220.015.024-INV}
\begin{document}

\begin{abstract}
  Let $T^*$ be an almost Suslin tree, that is, an Aronszajn tree with no stationary antichains. Krueger \cite{Krueger2020} introduced a forcing axiom, $\PFA(T^*)$, for the class of proper forcings that preserve that $T^*$ is almost Suslin. He showed that $\PFA(T^*)$ implies several well-known consequences of the Proper Forcing Axiom ($\PFA$), including Suslin's Hypothesis and the P-ideal dichotomy.
 We extend this list by proving that $\PFA(T^*)$ also implies the Mapping Reflection Principle ($\MRP$) and the Open Graph Axiom ($\OGA$). Additionally, we show that $\PFA(T^*)$ implies that all special Aronszajn trees are club-isomorphic, but it does not imply that all almost Suslin trees are club-isomorphic.
\end{abstract}

\maketitle

An Aronszajn tree is called \emph{almost Suslin} if it has no stationary
antichains. For such a tree $T^*$, Krueger
introduced in \cite{Krueger2020}  the forcing axiom $\PFA(T^{*})$, a relativized version of $\PFA$ which applies to all proper forcings preserving that $T^{*}$ is almost Suslin.

Krueger showed that $\PFA(T^*)$ already implies many of the well-known consequences
of the proper forcing axiom, including the Suslin's Hypothesis,
the $P$-ideal dichotomy,  and the failure of a very weak form of club
guessing. He raised the question of whether further familiar consequences of $\PFA$ also follow from $\PFA(T^*)$, specifically: the mapping reflection principle ($\MRP$), the open graph axiom ($\OGA$) and the non-existence of weak Kurepa trees.

A second question posed in \cite{Krueger2020}, attributed to Moore, asks whether $\PFA(T^*)$
 implies that all normal almost Suslin trees are club isomorphic. This is a natural analogue of the  fact that $\PFA$ implies all normal Aronszajn trees are club-isomorphic; a positive answer would suggest that models of
$\PFA(T^*)$ are somewhat canonical.

In this work, we address these questions. We show that $\PFA(T^{*})$  imply both $\MRP$ and $\OGA$, by constructing suitable modifications of the standard proper forcings for these principles. In contrast, we answer Moore's question negatively: starting from a model of $\diamondsuit$, we construct two almost Suslin trees $T$ and $S$ that are not club-isomorphic and show they remain so after forcing to obtain a model of $\PFA(T+S)$. As a positive counterpart, we prove that $\PFA(T^{*})$ does imply that any two normal special Aronszajn trees are club-isomorphic, thus establishing a partial analogue of the situation under $\PFA$.

We note that subsequent work by Lambie-Hanson and Stejskalová \cite{Lambie2025} has established that $\PFA(T^{*})$ implies the non-existence of weak Kurepa trees. Furthermore, \cite[Fact 6.19(4)]{Lambie2025} --a fact originally remarked in \cite{Krueger2020}-- can be used to show that $\PFA(T^{*})$ implies the non-existence of $\omega_2$-Aronszajn trees, via the standard argument from $\PFA$.

The structure of our paper is as follows.
After establishing the necessary preliminaries in Section \ref{sec:prelim}, we proceed to demonstrate the main combinatorial consequences of $\PFA(T^*)$. Specifically, Section \ref{sec:mrp} is dedicated to proving that the axiom implies the Mapping Reflection Principle ($\MRP$), and Section \ref{sec:oga} proves that it implies the Open Graph Axiom ($\OGA$). We then turn to structural consequences for trees in Section \ref{sec:club-iso}, where we resolve the question of whether $\PFA(T^*)$ implies that all almost Suslin trees are club-isomorphic.

\section{Preliminaries}\label{sec:prelim}
A  set-theoretic tree is a partial order set $(T,<_{T})$  such that
for any node $t \in T$, the set of predecessors  $\{s \in T : s <_{T} t\}$   is   well-ordered by $<_T$. The \emph{height} of $t$, written $\hht_{T}(t)$, is the unique ordinal representing the  order-type of the set of its predecessors.  For an ordinal $\alpha$, the
\emph{$\alpha$-th} level of $T$, written $T_{\alpha}$, consists
of the elements of $T$ of height $\alpha$. The \emph{height of $T$}
is the least ordinal $\alpha$ for which $T_{\alpha}$ is empty.

A \emph{branch} of $T$ is any downwards closed
chain of $T$, and a branch is called \emph{cofinal} if its height is equal to the height
of $T$. For each $t \in T$ and $\xi \le \hht(t)$, we let
$t\res \xi$ to be the unique $s \in T$ of height $\xi$ such that
$s \le_{T} t$. If the tree is clear from the context, then $t \perp s$ denotes the fact  that $t$ and $s$ are incomparable in the poset $(T,\le_{T})$. The tree $T$ is said to be \emph{well-pruned} if
for all $t \in T$, the subtree $\{s \in T : s \not\perp t\}$ has the same height
as $T$.

For incomparable $t,s \in T$, we let
$\Delta(t,s)$ to be the least ordinal  $\alpha$ such that
$t \res \alpha \neq s \res \alpha$. We leave $\Delta$ undefined
if $t$ and $s$ are comparable. A tree is called
\emph{Hausdorff} if $\Delta(t,s)$ is always a successor ordinal.

An $\omega_{1}$-tree is a tree of height $\omega_{1}$ with
all its levels countable. An  $\al$-tree is called an \emph{Aronszajn
tree} if it has no uncountable chains, equivalently, no cofinal branches.
An $\al$-tree is called a \emph{Suslin tree} if it has no uncountable chains and no uncountable
antichains. A typical argument shows that if $T$ is a well-pruned
$\omega_{1}$-tree, then it is Suslin iff it has no uncountable
antichains. A tree is called \emph{normal} if it is Hausdorff,
well-pruned, and
every node has $\aleph_{0}$ immediate successors.

\begin{definition} Let $T$ be a tree with height $\omega_1$. A subset
  $A\subseteq T$ is called a \emph{stationary antichain} if $A$ is an
  antichain and the set $\{\hht_T(t): t\in A\}$ is a stationary subset
  of $\omega_1$.
\end{definition}

\begin{definition} An \emph{almost Suslin tree} is any Aronszajn tree
  without stationary antichains.
\end{definition}

Fix an Aronszajn tree $T$.
The remaining definitions and lemmas of this section are all from
\cite{Krueger2020}. We define a relation $<_{T}^-$ by letting $x
<_{T}^- y$ if $\hht_{T} (x) < \hht_{T}(y)$ and $x \not<_{T} y$.

\begin{definition} Let $N$ be a countable set such that $\delta := N
  \cap \omega_1$ is an ordinal. A node $x \in T_\delta$ is said to be
  \emph{$(N,T)$-generic} if for any set $A \subseteq T$ which is a
  member of $N$ and any relation $R\in\{<_{T},<_{T}^- \}$, if $x\in A$,
  then there exists $yRx$ such that $y \in A$.
\end{definition}

\begin{lemma}\label{lemmaMRP}{\rm (\cite[Lemma 1.4]{Krueger2020})} Let $\theta$ be a regular cardinal with
  $T \in H(\theta)$, and $N$ a countable elementary submodel of
  $H(\theta)$ containing $T$ as an element. Let $\delta := N \cap
  \omega_1$. Assume that $x \in T_\delta$ is $( N , T )$-generic. Let $B
  \subseteq T$ be in $N$ and $R \in \{ <_{T}, <_{T}^- \}$. If for all
  $y R x$, $y \in B$, then $x \in B$.
\end{lemma}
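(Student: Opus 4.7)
The plan is to argue by contrapositive, reducing the statement to a direct application of the definition of $(N,T)$-genericity to the complementary set $A := T \setminus B$. The only observation needed beyond that is that $A$ still belongs to $N$, which is immediate from elementarity since both $T$ and $B$ are elements of $N$.

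More precisely, I would proceed as follows. First, I would set $A := T \setminus B$ and note that $A \in N$: since $T \in N$ and $B \in N$, the set $A$ is definable in $H(\theta)$ from parameters in $N$, hence belongs to $N$ by the elementarity of $N \prec H(\theta)$. Next, I would assume toward a contradiction that $x \notin B$, so that $x \in A$. The element $x$ lies in $T_{\delta}$ with $\delta = N \cap \omega_1$ and, by hypothesis, is $(N,T)$-generic. Applying the definition of genericity to the set $A \in N$ and the relation $R \in \{<_T, <_T^-\}$ (noting that the hypothesis $x \in A$ is satisfied), we obtain some $y \in T$ with $y R x$ and $y \in A$. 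But $y \in A$ means $y \notin B$, which directly contradicts the assumption that every $y R x$ lies in $B$. Therefore $x \in B$, as required.

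There is no genuine obstacle here: the lemma is essentially a restatement of $(N,T)$-genericity in its contrapositive form, and the only mild subtlety is confirming that the complement set $A$ is captured by $N$ via elementarity. The argument makes no use of any structural property of $T$ beyond its presence as an element of $N$, nor does it distinguish between the two choices of relation $R$.
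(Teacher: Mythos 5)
Your proof is correct: the lemma is exactly the contrapositive of the $(N,T)$-genericity condition applied to the set $T\setminus B$, which lies in $N$ by elementarity, and this is the same standard argument as in the cited source (the paper itself quotes the lemma from Krueger without reproducing a proof). No issues.
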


As mentioned in the introduction, we will be mainly interested in the
case that $T$ is almost Suslin, one of the reasons being the
following.

\begin{lemma}{\rm (\cite[Lemma 1.5]{Krueger2020})} Let $\theta$ be a regular cardinal such that $T \in
  H(\theta)$.  Let $N$ be a countable elementary submodel of $H(\theta)$
  containing $T$ as an element.  If $T$ is almost Suslin, then for all
  $x \in T_{N\cap\al}$, $x$ is $(N, T)$-generic.
\end{lemma}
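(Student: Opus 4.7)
The plan is to argue by contradiction. Fix $x \in T_\delta$ with $\delta := N \cap \al$, a set $A \in N$ with $A \seb T$ and $x \in A$, and a relation $R \in \{<_T, <_T^-\}$. Suppose toward contradiction that no $y$ with $y R x$ lies in $A$. I would pass to the $R$-minimal subset
\[
B := \{t \in A : \text{there is no } s \in A \text{ with } s R t\},
\]
which is definable from $T$, $A$ and $R$ and hence lies in $N$. The assumption gives $x \in B$, so $B$ has an element of height $\delta$. The two cases $R = {<_T}$ and $R = {<_T^-}$ now require separate arguments.

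For $R = {<_T}$, the set $B$ is an antichain, because $t_1 <_T t_2$ in $B$ would contradict the minimality of $t_2$. I would then show that the level set $S := \{\hht_T(t) : t \in B\}$ is stationary. Since $S \in N$, if it were non-stationary then by elementarity $N$ would contain a club $C \seb \al \setm S$; closedness of $C$ together with $\delta = N \cap \al$ forces $\delta \in C$, so $\delta \notin S$, contradicting $x \in B \cap T_\delta$. Thus $B$ is a stationary antichain, violating the almost Suslin hypothesis on $T$.

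For $R = {<_T^-}$, the definition of $B$ forces a chain-like structure: if $t_1, t_2 \in B$ and $\hht_T(t_1) < \hht_T(t_2)$, then $t_1 \not<_T^- t_2$ (else $t_2 \notin B$), so $t_1 <_T t_2$. If the heights of $B$ were unbounded in $\al$, then any two elements of $B$ at a common level would both be initial segments of some strictly higher element of $B$, hence equal; so $B$ would be a chain with at most one element per level and unbounded height in $\al$, i.e.\ a chain of order type $\al$, contradicting the Aronszajn property of $T$. Therefore the heights of $B$ are bounded in $\al$; by elementarity the bound can be taken in $N$, giving some $\beta < \delta$ with $\hht_T(t) < \beta$ for all $t \in B$, which contradicts $x \in B$ at level $\delta$.

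The main thing to get right is the chain-like analysis of $B$ in the $<_T^-$ case; once that is in hand, both halves reduce to standard reflection through $N$, using that clubs in $N$ contain $\delta$ and that bounded subsets of $\al$ named in $N$ lie strictly below $\delta$. It is worth noting that the almost Suslin hypothesis is used only in the $<_T$ case, while the $<_T^-$ case needs only that $T$ is Aronszajn.
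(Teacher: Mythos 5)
Your proof is correct. The paper itself gives no argument for this lemma (it is quoted from Krueger's paper), and your two-case treatment is essentially the standard one: for $<_T$, the $<_T$-minimal elements of $A$ form an antichain in $N$ whose level set must be stationary because it contains $\delta$ and $N$ reflects any club avoiding it, contradicting almost Suslinity; for $<_T^-$, your set $B$ is forced to be a chain if its heights are unbounded (contradicting Aronszajn), and otherwise a bound for it can be found in $N$ below $\delta$, contradicting $x\in B$ at level $\delta$. Your closing remark that almost Suslinity is only needed for the $<_T$ case, while the $<_T^-$ case uses only that $T$ is Aronszajn, is also accurate.
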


\begin{definition} Let $T$ be an $\omega_1$-tree and $\mathbb{P}$ a
  forcing poset. Let $\theta$ be a regular cardinal such that $T$ and
  $\mathbb{P}$ are members of $H(\theta)$ and $N$ a countable elementary
  submodel of $H(\theta)$ containing $T$ and $ \mathbb{P}$ as
  elements. A condition $q \in \mathbb{P}$ is said to be \emph{$(N,
    \mathbb{P}, T )$-generic} if $q$ is $(N,\mathbb{P})$-generic, and
  whenever $x \in T_{N \cap \omega_1}$ is $(N, T )$-generic, then $q$
  forces that $x$ is $(N[\dot G_\mathbb{P}],T)$-generic.
\end{definition}

Assume that $q$ is $(N, \mathbb{P})$-generic. Then the following
property is easily seen to be equivalent to $q$ being
$(N,\P,T)$-generic: for any node $x \in T_{N\cap{\al}}$ which is
$(N,T)$-generic, a $\P$-name $\dot A \in N$ for a subset of $T$, and
relation $R \in \{<_{T},<_{T}^- \}$, if $r \leq q$ forces that
$x\in\dot A$, then there exists $y R x$ and $s \leq r$ such that $s$
forces that $y\in \dot A$.

\begin{definition} Let $T$ be an $\al$-tree and $\P$ a forcing
  poset. We say that $\P$ is \emph{$T$-proper} if for all large enough
  regular cardinals $\theta$ with $\P$ and $T$ in $H(\theta)$, there are
  club many $N$ in $[H(\theta)]^\omega$ such that $N$ is an elementary
  submodel of $H(\theta)$ containing $T$ and $\P$ as elements satisfying that
  for all $p \in N \cap \P$, there is $q \leq p$ which is $(N, \P, T)$-generic.
\end{definition}

\begin{definition} Define $\PFA(T)$ to be the statement that for
  any $T$-proper forcing poset $\P$, whenever $\langle D_\alpha :
  \alpha <\al \rangle$ is a sequence of dense subsets of $\P$, then
  there exists a filter $G$ on $\P$ such that for all $\alpha<\al, G\cap
  D_\alpha \ne \emptyset$.
\end{definition}

\begin{theorem}
  If $T^{*}$ is an almost Suslin tree, and there is a supercompact
  cardinal, then there is a forcing poset which forces
  $\PFA(T^{*})$ and $\kappa = \omega_{2}$.
\end{theorem}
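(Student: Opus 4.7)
The plan is to adapt the standard Baumgartner-style supercompact construction of $\PFA$ to the context of $T^{*}$-proper forcings. Let $\kappa$ be supercompact, and fix a Laver function $f:\kappa\to V_{\kappa}$ with the usual anticipation property: for every set $x$ and every $\lambda \geq \kappa$, there is a $\lambda$-supercompact embedding $j:V\to M$ with critical point $\kappa$ such that $j(f)(\kappa)=x$. Define a countable-support iteration $\langle \P_{\alpha},\dt{\Q}_{\alpha}:\alpha<\kappa\rangle$ where, at stage $\alpha$, if $f(\alpha)$ is a $\P_{\alpha}$-name for a $T^{*}$-proper forcing of hereditary size less than $\kappa$, then $\dt{\Q}_{\alpha}:=f(\alpha)$; otherwise $\dt{\Q}_{\alpha}$ is trivial. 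The iteration is intended to have length $\kappa$ and to force $2^{\aleph_{1}}=\kappa=\omega_{2}$ together with $\PFA(T^{*})$.

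The key technical step is a preservation theorem: countable-support iterations of $T^{*}$-proper forcings are $T^{*}$-proper, and in particular preserve that $T^{*}$ is almost Suslin. The proof mirrors Shelah's proof that properness is preserved by countable support iteration, but carries along the additional $T^{*}$-genericity requirement. Fixing a countable $N \prec H(\theta)$ containing the relevant parameters with $\delta:=N\cap\al$, and a node $x\in T^{*}_{\delta}$ (which is automatically $(N,T^{*})$-generic by Lemma 1.5, since $T^{*}$ is almost Suslin), one builds by induction a fusion-style decreasing sequence of conditions $q_{n}\in N\cap\P_{\alpha_{n}}$ that meet all relevant dense sets in $N$ and, crucially, simultaneously force $x$ to be $(N[\dt G_{\P_{\alpha_{n}}}],T^{*})$-generic. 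At limit stages one uses countable support to amalgamate and the equivalent reformulation of $T^{*}$-genericity (given in the excerpt after the definition) to reduce to a bookkeeping of names $\dt A\in N$ and relations $R\in\{<_{T^{*}},<_{T^{*}}^{-}\}$ that could witness a failure of genericity. The point is that $T^{*}$-properness is exactly the kind of ``preservation of a countable object in $N[G]$'' condition that Shelah's abstract machinery handles.

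With the preservation theorem in hand, standard arguments complete the proof. First, a chain condition argument using the Laver function and reflection of the supercompact embedding shows that $\P_{\kappa}$ has the $\kappa$-c.c., so all cardinals $\geq\kappa$ are preserved, while the iteration is $T^{*}$-proper and hence preserves $\al$; thus $\kappa=\omega_{2}$ in the extension. Second, to verify $\PFA(T^{*})$ in $V^{\P_{\kappa}}$, let $\dt\Q$ be a $\P_{\kappa}$-name for a $T^{*}$-proper forcing and $\langle\dt D_{\alpha}:\alpha<\al\rangle$ a sequence of names for dense subsets. Pick a sufficiently closed supercompact embedding $j:V\to M$ with $j(f)(\kappa)$ coding $\dt\Q$ together with the sequence of dense sets; then in $M$ the stage $\kappa$ of the iteration $j(\P_{\kappa})$ is forced to be $\dt\Q$, and one reads off a generic filter meeting all the $\dt D_{\alpha}$ from the master-condition style argument standard in the $\PFA$ consistency proof.

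The principal obstacle is precisely the iteration theorem in the previous paragraph. The delicate point is the limit step, where one must ensure that the condition obtained by amalgamating the fusion sequence not only is $(N,\P_{\delta})$-generic in the ordinary sense, but also forces that $x\in T^{*}_{\delta}$ remains $(N[\dt G],T^{*})$-generic; this requires that every possible name $\dt A\in N$ for a subset of $T^{*}$, together with a witness that $x\in\dt A$, be addressed at some finite stage of the fusion. The rest of the proof — chain condition, cardinal arithmetic, and the Laver catch-up at $\kappa$ — is parallel to the classical construction and should go through with minor formal changes.
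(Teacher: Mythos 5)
This theorem is quoted in the paper without proof from Krueger \cite{Krueger2020}, and your outline follows essentially the same route as Krueger's original argument: a countable support iteration of $T^{*}$-proper posets guided by a Laver function, with the key ingredient being the preservation theorem that countable support iterations of $T^{*}$-proper forcings are $T^{*}$-proper (hence preserve that $T^{*}$ is almost Suslin), followed by the standard $\kappa$-c.c.\ and lifting arguments giving $\kappa=\omega_{2}$ and $\PFA(T^{*})$. Note only that this iteration theorem is the substantive content of the result and your proposal sketches it (``mirrors Shelah's proof'') rather than proving it, but as an approach it matches the cited source.
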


For the rest of this paper $T^{*}$ will always denote an almost Suslin tree.

\section{Mapping reflection principle}\label{sec:mrp}
In this section we shall prove that  $\PFA(T^*)$ implies Moore's Mapping Reflection Principle. We recall the relevant definitions.

Let $X$ be an uncountable set. We equip $[X]^\omega$ with the Ellentuck  topology obtained by declaring the sets $[a,A]:=\{Y\in[X]^\omega : a\subseteq Y\subseteq A\}$ to be open for all $a\in[X]^{<\omega}$ and $A\in[X]^\omega$. Let $\theta$ be a regular cardinal such that $[X]^\omega\in H(\theta)$ and let $M$ be a countable elementary submodel of $H(\theta)$ containing $[X]^\omega$ as an element. We say that a subset $\Sigma\subseteq [X]^\omega$ is $M$-\emph{stationary } if $\Sigma\cap E\cap M\ne\emptyset$ for any club $E\subseteq [X]^\omega$ in $M$.

\begin{definition}
    An \emph{open stationary} set mapping $\Sigma$ is a map whose domain is a club subset of countable elementary submodels of $H(\theta)$ containing $[X]^\omega$ as an element and such that $\Sigma(M)\subseteq [X]^\omega$ is open (in the Ellentuck topology) and $M$-stationary for all $M$ in its domain.
\end{definition}
\begin{definition}
An open stationary set mapping $\Sigma$ \emph{reflects} if there exists a continuous $\in$-increasing sequence  $\langle N_\xi:\xi<\omega_1\rangle $  such that, for all $0<\nu<\omega_1$, $N_\nu\in \dom(\Sigma)$  there exists $\nu_0<\nu $ such that $N_\xi\cap X\in \Sigma(N_\nu)$ for all $\nu_0<\xi<\nu$ (observe that the last condition is only meaningful for limit ordinals). We say that $\langle N_\xi:\xi<\omega_1\rangle $ is a reflecting sequence for $\Sigma$.
\end{definition}

\begin{definition}
The Set Mapping Reflection Principle  $\MRP$ is the assertion that all open stationary set mappings reflect.
\end{definition}
 We shall prove that there is $T^*$-proper forcing which adds a reflecting sequence. There are several proper posets in the literature for adding such a sequence (see \cite{Moore2005}, \cite{Moh2021}). However, we were unable to prove that any of them are $T^*$-proper. Thus, we will prove that a  variant of Moore's forcing is as required.
\begin{definition}
    Let $\Sigma$ be an open stationary set mapping (with $X, \theta$ as before). Define $\mathbb{P}_\Sigma$ to be the forcing poset whose conditions are all pairs $p=(f_p  ,F_p)$ where  $f_p:\alpha_p+1\to \dom(\Sigma)$ is a continuous   $\in$-increasing map, $\alpha_p\in\al$  and $F_p$ is a countable subset of $X$ such that for all  $0<\nu\leq\alpha_p$, there exists $\nu_0<\nu $ such that $f_p(\xi)\cap X\in \Sigma(f_p(\nu))$ for all $\nu_0<\xi<\nu$, ordered by $(f_q, F_q) \leq (f_p, F_p) $ if $f_q$ is an extension of $f_p$, $F_p\subseteq F_q$ and $F_p\in f_q(\xi)$ for all $\alpha_p<\xi\leq \alpha_q$.
\end{definition}
 The next two lemmas will help us prove that $\mathbb{P}_\Sigma$ is $T^*$-proper.

\begin{lemma}\label{keymrp}
    Let $\theta^*$ be a  regular cardinal such that $\mathbb{P}_\Sigma, H(|\mathbb{P}_\Sigma|^+), \Sigma$ and $T^*$ are members of $H(\theta^*)$, and let $M$ be a countable elementary submodel of $H(\theta^*)$ containing $\mathbb{P}_\Sigma,H\left (|\mathbb{P}_\Sigma|^+\right ), \Sigma $ and $T^*$ as elements. Let $p\in M\cap \mathbb{P}_\Sigma, \dot A\in M $ be a $\mathbb{P}_\Sigma$-name for a subset of $T^*$ and $y\in M\cap T^*.$ Then either:
    \begin{enumerate}

    \item[{\rm (i)}] there exists $q\leq p$ in $M$ such that $f_q(\xi)\cap X\in \Sigma(M\cap H(\theta))$ for $\alpha_p<\xi\leq \alpha_q$ and $q\Vdash y\in \dot A$, or

   \item[{\rm (ii)}]  there exists $F\in [X]^\omega$ such that $(f_p, F_p\cup F)\Vdash y\notin \dot A.$
        \end{enumerate}

\end{lemma}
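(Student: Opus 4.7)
The plan is to prove the contrapositive: assuming (ii) fails, construct a witness for (i). Let $N := M \cap H(\theta)$. Since $[X]^\omega, \Sigma, \mathbb{P}_\Sigma, T^*$ belong to $M$ and lie in $H(\theta)$, a standard reflection argument gives $N \prec H(\theta)$ with $N \in \dom(\Sigma)$, so that $\Sigma(N) \subseteq [X]^\omega$ is open in the Ellentuck topology and $N$-stationary.

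The aim is to locate, inside $N$, some $A \in \dom(\Sigma)$ with $A \cap X \in \Sigma(N)$ for which the one-step extension
\[
q_A := \bigl(f_p \cup \{(\alpha_p+1, A)\},\, F_p\bigr)
\]
is a valid condition in $\mathbb{P}_\Sigma$ and admits some side-condition strengthening $q := (f_{q_A}, F_p \cup F)$, with $F \in A \cap [X]^\omega$, that forces $y \in \dot A$. Such a $q$ lies in $A \subseteq M$, extends $p$, has $\alpha_q = \alpha_p + 1$, and satisfies $f_q(\alpha_p + 1) \cap X = A \cap X \in \Sigma(N)$, so it witnesses (i). (The trivial sub-case in which some $F \in M$ already yields $(f_p, F_p \cup F) \Vdash y \in \dot A$ is handled immediately, with $\alpha_q = \alpha_p$ and the $\Sigma(N)$-condition vacuous.)

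To produce such $A$, I would define inside $N$ the set $\mathcal{C}$ of all $A \in \dom(\Sigma)$ containing the relevant parameters $\{p, F_p, f_p(\alpha_p), y, \dot A\}$ for which $q_A \in \mathbb{P}_\Sigma$ and some $F \in A \cap [X]^\omega$ satisfies $(f_{q_A}, F_p \cup F) \Vdash y \in \dot A$. Granting that $\{A \cap X : A \in \mathcal{C}\}$ contains an $N$-club in $[X]^\omega$, the $N$-stationarity of $\Sigma(N)$ supplies some $A \in \mathcal{C} \cap N$ with $A \cap X \in \Sigma(N)$; by elementarity the witnessing $F$ can be taken in $N$ as well, yielding the desired $q$.

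The main obstacle is establishing that $\mathcal{C}$ projects to an $N$-club. The strategy exploits the dual role of $A$: as the new level $\alpha_p + 1$ of the extension, and simultaneously as a countable elementary submodel inside which the failure of (ii) reflects. For sufficiently closed $A$, the failure of (ii) holds inside $A$ by elementarity, providing for every $F \in A \cap [X]^\omega$ an extension $r \leq (f_p, F_p \cup F)$ in $A$ forcing $y \in \dot A$. One must then amalgamate such $r$ with $q_A$ into a common extension of the form $(f_{q_A}, F_p \cup F')$ for some $F' \in A$; when $f_r$ properly extends $f_p$, the amalgamation requires absorbing the intermediate models occurring in $f_r$ together with their $\Sigma$-continuity into the data of $A$, which is feasible when $A$ is closed under the Skolem functions of $H(\theta)$ witnessing such extensions, so that $A$ itself plays both the outer reflecting role and the inner level-$(\alpha_p+1)$ role. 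Handling this amalgamation — especially when the witnessing $r$ is forced to lengthen $f_p$ nontrivially — is the technical heart of the argument, and it may require iterating the single-step construction along a countable fusion-like sequence built inside $A$.
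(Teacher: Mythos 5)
Your reduction to the contrapositive (assume (ii) fails, produce a witness for (i)) is logically fine, but the construction you sketch breaks down at exactly the point you flag as the ``technical heart,'' and it cannot be repaired in the form you describe. The failure of (ii) only provides, for each countable $F$, some $r\le(f_p,F_p\cup F)$ forcing $y\in\dot A$, and such an $r$ will in general properly lengthen the $f$-part. No condition of the shape $(f_{q_A},F_p\cup F')$ can then serve as a ``common extension'': anything extending $r$ must contain $f_r$ in its first coordinate, so the intermediate models of $f_r$ cannot be ``absorbed'' into the single new level $A$; and there is no reason that a condition whose $f$-part is $f_p$ plus one model decides $y\in\dot A$ positively, however the working part is enlarged, so the $N$-club you need for $\mathcal{C}$ is not available. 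A second problem is the intended dual role of $A$: as a new level of a condition, $A$ must lie in $\dom(\Sigma)$, i.e.\ be a countable elementary submodel of $H(\theta)$, but the statement you want to reflect into $A$ --- the failure of (ii) --- is about the $\P_\Sigma$-forcing relation and the name $\dot A$, which are not visible to submodels of $H(\theta)$; this is precisely why the lemma assumes $H(|\P_\Sigma|^+)\in M$. Note also that the openness of $\Sigma(M\cap H(\theta))$ plays no load-bearing role in your argument, whereas it is essential.

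The missing idea (this is the paper's proof, just run in your direction of the dichotomy) is that the reflecting model is never placed into the condition at all. Let $E$ be the club of traces $N=N^*\cap X$ of countable $N^*\prec H(|\P_\Sigma|^+)$ containing $H(\theta),\P_\Sigma,p,y,\dot A,T^*$; by $(M\cap H(\theta))$-stationarity pick such an $N\in E\cap\Sigma(M\cap H(\theta))\cap M$ (with witness $N^*\in M$), and by \emph{openness} a finite $F\subseteq N$ with $[F,N]\subseteq\Sigma(M\cap H(\theta))$. If (ii) fails for this $F$, some $r\le(f_p,F_p\cup F)$ forces $y\in\dot A$, and by elementarity of $N^*$ such an $r$ exists inside $N^*\subseteq M$. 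Now the definition of the order does the work you were attempting by amalgamation: for every $\xi$ with $\alpha_p<\xi\le\alpha_r$ we have $F_p\cup F\in f_r(\xi)$, hence $F\subseteq f_r(\xi)$, while $f_r(\xi)$ is a countable element of $N^*$, so $f_r(\xi)\cap X\in[F,N]\subseteq\Sigma(M\cap H(\theta))$. Thus $r$ itself witnesses (i); no condition of prescribed length, no $q_A$, and no fusion is needed. (The paper phrases this contrapositively: if (i) fails, then for this $N$ and $F$ the condition $(f_p,F_p\cup F)$ must force $y\notin\dot A$, since otherwise the reflected extension found in $N^*$ would witness (i).)
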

\begin{proof}
    Suppose (i) fails. Let $E$ denote the collection of all sets of the form $N=N^*\cap X$ where $N^*$ is a countable elementary submodel of $H(|\mathbb{P}_\Sigma|^+)$ containing $H(\theta), p, \dot A, y, T^* $ and $\mathbb{P}_\Sigma$ as elements. It follows that $E\subseteq [X]^\omega$ is a club in $M\cap H(\theta)$. Since $\Sigma(M\cap H(\theta))$ is open and $M\cap H(\theta)$-stationary there is an $N\in E\cap \Sigma(M\cap H(\theta))\cap M$ and there is an $F\in [N]^{<\omega}$ such that $[F,N]\subseteq \Sigma(M\cap H(\theta))$. We claim that $(f_p, F_p\cup F)\Vdash y\notin \dot A$.  To prove the claim. Suppose for a contradiction that this is not the case. Then there is a $r\leq (f_p, F_p\cup F)$ such that $r\Vdash y\in \dot A$. Hence, by elementarity, there is a $q\in N^*\cap \mathbb{P}_\Sigma$ such that $q\Vdash y\in \dot A$. By the definition of the order, it follows that $f_q(\xi)\in [F,N]\subseteq \Sigma(M\cap H(\theta))$ for all $\alpha_p<\xi\leq \alpha_q$. Thus, $q$ is a witness that (i) holds which is a contradiction.

\end{proof}
     \begin{lemma}\label{keymrp2}
    Let $\theta^*$ be a  regular cardinal such that $\mathbb{P}_\Sigma, H(|\mathbb{P}_\Sigma|^+), \Sigma$ and $T^*$ are members of $H(\theta^*)$, and let $M$ be a countable elementary submodel of $H(\theta^*)$ containing $\mathbb{P}_\Sigma,H\left (|\mathbb{P}_\Sigma|^+\right ), \Sigma $ and $T^*$ as elements. Let $\delta=M\cap \omega_1$. Suppose that $x\in T_\delta^*$, $p\in M\cap \mathbb{P}_\Sigma, \dot A\in M $ is a $\mathbb{P}_\Sigma$-name for a subset of $T^*$ and $R\in\{<_{T^*},<_{T^*}^-\}.$  Then either:
    \begin{enumerate}

    \item[{\rm (i)}] there exists $yRx$ and $q\leq p$ in $M$ such that $f_q(\xi)\cap X\in \Sigma(M\cap H(\theta))$ for $\alpha_p<\xi\leq \alpha_q$ and $q\Vdash y\in \dot A$, or

   \item[{\rm (ii)}]  there exists $F\in [X]^\omega$ such that $(f_p, F_p\cup F)\Vdash x\notin \dot A.$
        \end{enumerate}

\end{lemma}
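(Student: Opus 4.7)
The plan is to adapt the proof of Lemma \ref{keymrp} by exploiting the $(M,T^*)$-genericity of $x$ that is automatic under almost Suslinness of $T^*$. The intuition is that the set of nodes $z\in T^*$ for which alternative~(ii) of Lemma \ref{keymrp} has a witness is definable in $M$; once this set is introduced, Lemma \ref{lemmaMRP} will let me transfer the statement from the critical node $x\in T^*_\delta$ to some $y \mathrel{R} x$ below it, and then Lemma \ref{keymrp} itself will close the argument on $y$.

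Explicitly, the first step is to set
\[
  B:=\bigl\{\, z\in T^* : \exists F\in[X]^\omega\ (f_p,F_p\cup F)\forces z\notin \dot A\,\bigr\},
\]
and to note that $B$ is definable from $T^*, X, \mathbb{P}_\Sigma, p, \dot A$, all of which lie in $M$, so that $B\in M$ by elementarity. If $x\in B$ then alternative~(ii) holds by the very definition of $B$ and we are done. So assume $x\notin B$. Because $T^*$ is almost Suslin, by the cited Lemma~1.5 of Krueger the node $x\in T^*_\delta$ is $(M,T^*)$-generic, and so the contrapositive of Lemma \ref{lemmaMRP} applied to $B$, $x$ and $R$ produces some $y\mathrel{R}x$ with $y\notin B$.

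At this point I have to observe that $y\in M$: since $\hht_{T^*}(y)<\delta=M\cap\al$, the ordinal $\hht_{T^*}(y)$ is in $M$, and because $T^*$ is an $\al$-tree, the whole level $T^*_{\hht_{T^*}(y)}$ is a countable element of $M$, hence $y\in M\cap T^*$. As $y\notin B$ means that alternative~(ii) of Lemma \ref{keymrp} fails for $y$, Lemma \ref{keymrp} yields a condition $q\leq p$ in $M$ with $f_q(\xi)\cap X\in\Sigma(M\cap H(\theta))$ for $\alpha_p<\xi\leq\alpha_q$ and $q\forces y\in\dot A$, which is exactly alternative~(i). The only conceptual step requiring care is the definition of $B$ and the verification that $B\in M$, since this is what lets me cash in the genericity of $x$ via Lemma \ref{lemmaMRP} and reduce cleanly to the previous lemma; everything else is $\omega_1$-tree bookkeeping.
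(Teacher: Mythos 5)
Your proof is correct and is essentially the paper's own argument: the same auxiliary set $B$, the same appeal to Lemma~\ref{lemmaMRP} via the $(M,T^*)$-genericity of $x$, and the same reduction to Lemma~\ref{keymrp}, merely run in contrapositive form (assuming $x\notin B$ rather than assuming (i) fails). Your extra verifications that $x$ is $(M,T^*)$-generic by almost Suslinness and that $y\in M$ because its level is a countable element of $M$ are exactly the points the paper leaves implicit.
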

\begin{proof}
    If (i) fails, then it follows, from Lemma \ref{keymrp}, that for all $yRx$ there exists $F_y\in[X]^\omega$ such that $(f_p, F_p\cup F_y)\Vdash y\notin \dot A$. Let $B$ be the set of $z \in T^*$ for which there exists a countable set $F\in [X]^\omega$ such that $(f_p,F_p \cup F)\Vdash z\notin \dot A$. Notice that $B \in M$ since it is definable from parameters in $M$. By assumption for all $y R x, y \in B$. Applying \ref{lemmaMRP}, it follows that $x \in B$. So there exists a countable set $F\in [X]^\omega$  such that $(f_p, F_p \cup F) \Vdash x\notin \dot A$.
\end{proof}
We are ready to prove the main Theorem of the section.
\begin{theorem}
    $\PFA(T^*)$ implies $\MRP$.
\end{theorem}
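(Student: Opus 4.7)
The plan is to show that, for any open stationary set mapping $\Sigma$, the forcing $\mathbb{P}_\Sigma$ defined above is $T^*$-proper; once this is in hand, $\PFA(T^*)$ produces a reflecting sequence at once. To extract such a sequence, observe that for each $\alpha<\omega_1$ the set $D_\alpha=\{p\in\mathbb{P}_\Sigma:\alpha\leq\alpha_p\}$ is dense in $\mathbb{P}_\Sigma$ (given $p$, extend $f_p$ by successively choosing suitable countable elementary submodels, invoking at each limit the stationarity of the appropriate $\Sigma$-values to preserve coherence). A filter $G$ meeting every $D_\alpha$ produces $f=\bigcup\{f_p:p\in G\}$, a continuous $\in$-increasing map $\omega_1\to\dom(\Sigma)$, and the ordering of $\mathbb{P}_\Sigma$ guarantees that $\langle f(\xi):\xi<\omega_1\rangle$ is a reflecting sequence for $\Sigma$.

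The substantive work is therefore to establish $T^*$-properness. Fix $\theta^*$ large, a countable $M\prec H(\theta^*)$ containing $\mathbb{P}_\Sigma,\Sigma,T^*$ and $H(|\mathbb{P}_\Sigma|^+)$, and set $\delta=M\cap\omega_1$. Take $p\in M\cap\mathbb{P}_\Sigma$. Since $T^*$ is almost Suslin, every $x\in T^*_\delta$ is $(M,T^*)$-generic, so by the equivalent form of $(M,\mathbb{P}_\Sigma,T^*)$-genericity recorded in the preliminaries we need $q\leq p$ that is $(M,\mathbb{P}_\Sigma)$-generic and such that for every name $\dot A\in M$ for a subset of $T^*$, every $x\in T^*_\delta$ and every $R\in\{<_{T^*},<_{T^*}^-\}$, any $r\leq q$ forcing $x\in\dot A$ admits an extension $s$ forcing $y\in\dot A$ for some $y R x$. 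Enumerate in a single countable list all dense open $D\in M$ of $\mathbb{P}_\Sigma$ together with all triples $(\dot A,x,R)$ of the above form, and build a fusion $p=p_0\geq p_1\geq\cdots$ in $M$ with $\sup_n\alpha_{p_n}=\delta$, maintaining the invariant $f_{p_{n+1}}(\xi)\cap X\in\Sigma(M\cap H(\theta))$ for all $\alpha_{p_n}<\xi\leq\alpha_{p_{n+1}}$. Density tasks are settled by the standard Moore step used inside the proof of Lemma \ref{keymrp}: one selects $N\in\Sigma(M\cap H(\theta))\cap M$ from a club of traces of countable elementary submodels, extends into the chosen dense set inside the relevant larger submodel, and pulls back into $M$ by elementarity. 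Each genericity task $(\dot A,x,R)$ is resolved by Lemma \ref{keymrp2}: in case~(i) one gets $p_{n+1}\in M$ with the required $\Sigma$-coherence that additionally forces $y\in\dot A$ for some $yRx$; in case~(ii) the finite $F$ produced lies in $M$ (it is a finite subset of a submodel in $M$), so $(f_{p_n},F_{p_n}\cup F)$ forces $x\notin\dot A$, after which a further Moore step restores progress on density. The limit condition $q$ with $f_q=\bigcup_n f_{p_n}\cup\{(\delta,M\cap H(\theta))\}$ and $F_q=\bigcup_n F_{p_n}$ belongs to $\mathbb{P}_\Sigma$ (the $\Sigma$-clause at the new top is witnessed by $\nu_0=\alpha_{p_0}$) and extends every $p_n$. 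Its $(M,\mathbb{P}_\Sigma)$-genericity is the usual consequence of meeting every $D\in M$; and for each enumerated triple $q$ either forces $y\in\dot A$ for some $yRx$ (case i, so any $r\leq q$ forcing $x\in\dot A$ already satisfies the requirement with $s=r$) or forces $x\notin\dot A$ (case ii, making the requirement vacuous).

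The principal obstacle is to interleave, at each fusion stage, three demands simultaneously: meeting the next dense set, keeping the trajectory of $f$ inside the $M\cap H(\theta)$-stationary set $\Sigma(M\cap H(\theta))$, and decisively settling the current $T^*$-genericity task. Moore's original MRP forcing is not obviously $T^*$-proper, and this is exactly why the present variant $\mathbb{P}_\Sigma$ was formulated: Lemma \ref{keymrp2} in its case~(i) returns an extension that still belongs to $M$ and whose $f$-values still lie in $\Sigma(M\cap H(\theta))$, which is precisely the compatibility needed for the $T^*$-genericity step to compose cleanly with the standard Moore density step. Once $T^*$-properness is established, applying $\PFA(T^*)$ to the family $\{D_\alpha:\alpha<\omega_1\}$ yields the reflecting sequence and completes the proof.
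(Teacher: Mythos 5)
Your overall architecture (the forcing $\P_\Sigma$, the two lemmas, a fusion inside $M$ respecting the $\Sigma(M\cap H(\theta))$-coherence invariant, topping off with $M\cap H(\theta)$) is the same as the paper's, but there is a genuine gap at the case~(ii) step of the genericity tasks. You assert that the set $F$ produced by Lemma~\ref{keymrp2}(ii) ``is finite and lies in $M$ (it is a finite subset of a submodel in $M$)''. That description applies to the witness in Lemma~\ref{keymrp}, where the node $y$ belongs to $M$; it does not apply to Lemma~\ref{keymrp2}(ii). There the witness is obtained by applying Lemma~\ref{lemmaMRP} to the set $B$ of nodes $z$ admitting \emph{some} countable $F$ with $(f_p,F_p\cup F)\forces z\notin\dt{A}$: since $x\in T^*_\delta$ has height $\delta=M\cap\omega_1$, $x\notin M$, so elementarity gives no control over the witness, which is merely some countable (one can arrange finite, but not $M$-definable) subset of $X$, in general not an element of $M$. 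Your construction needs $F\in M$ in an essential way: to keep $p_{n+1}\in M$, to perform the ``further Moore step'' (any extension of $(f_{p_n},F_{p_n}\cup F)$ found by elementarity inside a countable $N^*\in M$ must have its new models contain $F_{p_n}\cup F$, which is impossible when $F\notin M\supseteq N^*$), and ultimately to have $q\leq(f_{p_n},F_{p_n}\cup F)$. Moreover, with your choice $F_q=\bigcup_n F_{p_n}$ the limit condition $q$ bears no relation to the conditions $(f_{p_n},F_{p_n}\cup F)$ at all, so the claim that $q\forces x\notin\dt{A}$ (the ``vacuous'' half of your final verification) is unsupported; if instead you meant to absorb $F$ into $F_{p_{n+1}}$, then the invariant $p_{n+1}\in M$ and all subsequent Moore steps break for the reason just given.

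For comparison, the paper resolves exactly this difficulty differently: at a case~(ii) stage it does \emph{not} extend the condition (it sets $p_{n+1}:=p_n$, so the fusion never leaves $M$) and merely records the countable set $G(n)$; only at the very end are all these possibly external sets added to the side-condition coordinate of the final condition, i.e.\ $F_q:=\bigcup_n F_{p_n}\cup\bigcup\{G(n):G(n)\in[X]^\omega\}$, and the negative forcing facts are transferred to $q$ through the relation of $q$ to the auxiliary conditions $(f_{p_n},F_{p_n}\cup G(n))$ rather than through membership of the witnesses in $M$. So the point where your write-up diverges from the paper is precisely the point that required a new idea, and the shortcut you take there does not work as stated; you would need either to prove that the witness in Lemma~\ref{keymrp2}(ii) can be found in $M$ (which the lemma does not give) or to restructure the limit condition as the paper does.
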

\begin{proof}
 We shall prove that $\mathbb{P}_\Sigma$ is $T^*$-proper and adds a reflecting sequence. To do this, fix a large enough regular cardinal $\theta^*$ and a countable elementary substructure $M$ of $H(\theta^*)$ such that  $\mathbb{P}_\Sigma, \Sigma, T^*$ and $H(|\mathbb{P}_\Sigma|^+)$ are all in $M$. Consider $p\in \mathbb{P}_\Sigma\cap M.$ Let $\delta=M\cap \omega_1.$
Let $\langle (D_n, x_n,\dot A_n,i_n):n<\omega\rangle$ be an enumeration of all $4$-tuples $(D,x,\dot A,i)$ where $D$ is a dense subset of $\mathbb{P}_\Sigma$ in $M$, $x\in T^*_\delta$, $\dot A$ is a $\P_\Sigma$-name for a subset of $T^*$ and $i\in 3$. We will construct recursively a partial function $G:\omega\to ([X]^\omega\cup T^*\restriction \delta)$ and a sequence of conditions $\langle p_n:n<\omega\rangle $     such that:
\begin{enumerate}[(i)]
    \item $p_{n+1}\leq p_n$;
    \item $p_{n+1}\in D_n$ whenever $i_n=0$;
    \item $p_n \in \mathbb{P}_\Sigma\cap M;$
    \item $\forall \alpha_{p_n}< \xi\leq \alpha_{p_{n+1}}$, $  f_{p_{n+1}}(\xi)\cap X\in \Sigma(H(\theta)\cap M);  $
    \item $G$ satisfies the following:
    \subitem{\rm (a)} If $G(n)\in T^*\restriction \delta,$ then $p_{n+1}\forces G(n)\in \dt{A}_n$, otherwise
    \subitem{\rm (b)}$(f_{p_n},F_{p_{n}}\cup G(n))\forces x_n\notin \dt{A}_n$.
\end{enumerate}
  Let $p_0:=p$. Suppose that $p_n$ and $G$ (up to $n$) have been defined. We proceed by cases. First suppose $i_n=0$. In this case $n\notin \dom(G)$. Let $E_n $ be the set of all elements of the form $N=N^*\cap X$ where $N^*$ is a countable elementary submodel of $H(|\mathbb{P}_\Sigma|^+)$   which contains $H(\theta), \P_\Sigma,  D_n, \dot A_n$, $T^*$ and $p_n$ as elements. Notice that $E_n\subseteq [X]^\omega$ is a club in $M\cap H(\theta)$. Using that $\Sigma(M\cap H(\theta))$ is open and $(M\cap H(\theta))$-stationary, we can find $N_n\in \Sigma(M\cap H(\theta))\cap E_n\cap M, $ and a finite set $F_n\subseteq N_n$ such that $[F_n,N_n]\subseteq \Sigma(M\cap H(\theta)). $  Let $N_n^*$ be a countable elementary submodel of $H(|\mathbb{P}_\Sigma|^+)$ witnessing that $N_n\in E_n$. Let $p_{n+1}$ be any extension of $ (f_{p_n},F_{p_n}\cup F_n)$ in $D_n\cap M$ (such an extension exists by elementarity). Next suppose, $i_n\ne 0$. If $i_n=1$, then set $R=<_{T^*}$ and otherwise set $R=<_{T^*}^-$.  By Lemma \ref{keymrp2}, there are two possibilities, either there is an $r\leq p_n$ in $M$ and $yRx$ such that  $f_r(\xi)\cap X\in \Sigma(H(\theta)\cap M)$ for $\alpha_{p_n}<\xi\leq \alpha_r$ and $r\Vdash y\in \dot A$. In this case set $G(n):=y$ and $p_{n+1}:=r$. Or there is a $G_n\in[X]^\omega$ such that $(f_{p_n},F_{p_n}\cup G_n)\Vdash x_n\notin \dot A_n$. In this case, set $p_{n+1}:=p_n$ and $G(n):=G_n$.

  Let $f_q:=\bigcup_{n\in \omega}f_{p_n}\cup \{(\alpha,M\cap H(\theta))\}$ where $\alpha:=\sup_{n\in\omega}\alpha_{p_n}$ and $$F_q:=\bigcup_{n\in\omega} F_{p_n}\cup \bigcup\{G(n) : G(n)\in [X]^\omega\}.$$ Let $q:=(f_q,F_q)$. We claim that $q$ is $(N,\P_\Sigma,T^*)$-generic.
First we check that $q$ is a condition. Notice that for each, $x\in M\cap X$  the set $D_x:=\{r\in \P_\Sigma: x\in \bigcup\ran( f_r)\}$ is an open dense set in $M$. Then it follows from the construction that $\bigcup_{\xi<\alpha}f_q(\xi)=M\cap H(\theta). $ We infer from clause (iii)  that $f_q(\xi)\cap X\in \Sigma(M\cap H(\theta))$ for $\alpha_{p}<\xi<\alpha$. Thus, $q$ is a  condition and by definition $q\leq p_n$ for $n\in\omega.$ By construction  $q\in D$ for every dense open subset in $M$. Hence, $q$ is $(M,\P_\Sigma)$-generic.
It remains to show that $q$ is $(M,\P_\Sigma,T^*)$-generic. To do this, let $D$ be a dense open set in $M$ (arbitrary), $x\in T^*_\delta$, $\dot A\in M$ a $\P_\Sigma$-name for a subset of $T^*$ and $R\in\{<_{T^*},<_{T^*}^-\}$. Set $i=1$ if $R=<_{T^*}$ and $i=2$ otherwise. Fix $n$ such that $(D_n,x_n,\dot A_n,i_n)=(D,x,\dot A,i)$. At stage $n$ of the construction we consider two possibilities. The first was the existence of  a $yRx$ and $r\leq p_n$ such that $r\Vdash y\in \dot A$. In this case $G(n)Rx$ and $p_{n+1}\Vdash G(n)\in \dot A$. Thus, $G(n)Rx$ and $q\Vdash G(n)\in \dot A$ as required. The second possibility is that there exists an element in $[X]^\omega$, called $G(n),$ such that $(f_{p_n},F_{p_n}\cup G(n))\Vdash x\notin \dot A $. Since $q\leq (f_{p_n},F_{p_n}\cup G(n))$ then it follows that $q\Vdash x\notin \dot A$.

We now show that $\P_\Sigma$ adds a reflecting sequence. We claim that, for each $\alpha<\al$, the set $D_\alpha:=\{p\in\P_\Sigma: \alpha\in \dom(p)\}$ is dense open.  To see this, notice that the sets $D_x:=\{p\in \P_\Sigma: x\in\bigcup \ran(p)\}$ are dense open for every $x\in H(\theta)$. It follows that $\mathbbm{1}\Vdash \check H(\theta)=\bigcup_{p\in \dot G}\ran(p)$. Since $\P_\Sigma$ is proper, then $\omega_1$ is preserved and hence,  $\mathbbm{1}\Vdash \check \al\subseteq \dom(\bigcup \dot G) $. This concludes the proof of the Theorem.
\end{proof}

\section{Open Graph Axiom}\label{sec:oga}
 The purpose of this section is to prove that $\PFA(T^*)$ implies  $\OGA$. We recall the relevant definitions.
\begin{definition}
    Let $(X,d)$ be a separable metric space. An open graph on $X$ is a structure $\mathcal{G} = (X, E)$ where the edge set $E$ is viewed as an open symmetric subset of $X^2 \setminus \{(x, x) : x \in X\}$.
\end{definition}

\begin{definition}
    The open graph axiom $\OGA$ is the statement that whenever $\mathcal{G} = (X, E)$ is an open graph on a separable metric space $(X,d)$, then either there is an uncountable subset $Y\in[X]^{\al}$ such that $[Y]^2\subseteq E$ or there is  decomposition $X=\bigcup_{n\in\omega}X_n$ with $[X_n]^2\cap E=\emptyset$ for all $n\in\omega$.

\end{definition}

Fix an open graph  $\mathcal{G} = (X, E)$ on a separable metric space $(X,d)$. We shall prove that, assuming $\PFA(T^*)$, one of the two alternatives of $\OGA$ holds for $\mathcal{G}$. If the second alternative holds, then there is nothing to do. Hence, suppose the second alternative fails. We will prove that there is a $T^*$-proper forcing which adds an uncountable subset $Y\subseteq X$ such that $[Y]^2\subseteq E$. There are several proper forcing in the literature that add such a set. We shall prove that a small variant of Todorcevic's forcing in \cite{Todorcevic2011} is as required.

Let $\mathcal{I}$ be the (proper) $\sigma$-ideal consisting of all subsets $Y$ of $X$ that are countably chromatic i.e. there is a decomposition $Y=\bigcup_{n\in\omega}Y_n$ with $[Y_n]^2\cap E=\emptyset$ for all $n\in\omega$. Furthermore, by removing a relatively open subset of $X$, we may assume that no nonempty  open subset of $X$ belongs to $\mathcal I$.

Let $\mathcal{H}:=\mathcal{I}^+$ denote the corresponding co-ideal i.e. the set of elements not in $\mathcal{I}$.
\begin{definition}
    For each $n>0$, the Fubini power $\mathcal{H}^n$ is the co-ideal on $X^n$ defined recursively by $\mathcal{H}^1:= \mathcal{H}$ and for $n > 1$,
$$\mathcal{H}^n=\{W \subseteq X^n :\{\overline{x} \in X^{n-1} :W_{\overline{x}}\in \mathcal{H}\}\in \mathcal{H}^{n-1}\},$$ where for $W \subseteq X^n$ and $\overline{x} \in X^{n-1}$,
$$W_{\overline{x}}= \{ y \in X : \overline{x}^\frown y \in W \}.$$
\end{definition}

 The following Lemma appears as \cite[Lemma 5.1]{Todorcevic2011}.
\begin{lemma}
    Suppose $\mathcal{G} = (X,E)$ is an open graph on a separable metric space $(X,d)$ which is not countably chromatic and that $n$ is a positive integer. Let $W \in \mathcal{H}^n(\mathcal{G})$ and let
$$
\partial W := \{\overline{x}\in W : (\forall \epsilon > 0)(\exists \overline{y}\in W )(\forall i < n)[(x_i,y_i) \in E \wedge d(x_i,y_i) < \epsilon]\}
$$
Then $W \setminus \partial W \notin \mathcal{H}^n(\mathcal{G})$.
\end{lemma}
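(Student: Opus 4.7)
The plan is to prove the lemma by induction on $n$. For the base case $n=1$, the argument is a standard partition. For each $x \in W \setminus \partial W$ fix a rational $\epsilon_x > 0$ witnessing that $x \notin \partial W$ (so no $y \in W$ satisfies both $(x,y) \in E$ and $d(x,y) < \epsilon_x$), and partition $W \setminus \partial W = \bigcup_{k\in\omega} W_k$ by the bound $\epsilon_x > 1/k$. Refining each $W_k$ by a countable open cover of $X$ into pieces of diameter less than $1/k$ (using separability of $X$), any two points in the same refined piece are at distance less than $1/k$, so the witness forces the edge between them to be absent. Hence each piece is $E$-independent and therefore belongs to $\mathcal{I}$; as $\mathcal{I}$ is a $\sigma$-ideal, $W \setminus \partial W \in \mathcal{I}$, so $W \setminus \partial W \notin \mathcal{H}$.

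For the inductive step, assuming the result for $n-1$, the identical partition reduces the task to showing that every piece $V := (W \setminus \partial W)_k \cap \prod_{i<n} O_{j_i}$ lies in $\mathcal{I}^n$. Such a $V$ is \emph{$E^n$-independent}: for any distinct $\overline{x}, \overline{y} \in V$ we have $d(x_i, y_i) < 1/k$ for all $i$, so the witness for $\overline{x}$ forces some coordinate $i$ with $(x_i, y_i) \notin E$. I would prove that any such $E^n$-independent $V$ lies in $\mathcal{I}^n$ via a nested induction on $n$. Suppose for contradiction $T := \{\overline{x} \in X^{n-1} : V_{\overline{x}} \in \mathcal{H}\} \in \mathcal{H}^{n-1}$. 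Apply the outer inductive hypothesis to $T$ to get $T \cap \partial T \in \mathcal{H}^{n-1}$, and pick $\overline{x}$ in it together with an $E^{n-1}$-approaching sequence $\overline{x}^{(m)} \in T$; apply the base case to $V_{\overline{x}} \in \mathcal{H}$ to pick $y \in V_{\overline{x}} \cap \partial V_{\overline{x}}$ together with an $E$-approaching sequence $y^{(l)} \in V_{\overline{x}}$.

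The main obstacle is exactly closing this contradiction. The $E^n$-independence of $V$ forbids any $E$-edge between $V_{\overline{x}}$ and $V_{\overline{x}^{(m)}}$ for large $m$, so the goal becomes producing such an edge from the data above. The natural route is to use openness of $E$: each $(y, y^{(l)}) \in E$ yields, by openness, an open neighborhood $B_l$ of $y^{(l)}$ contained in $\{z : (y, z) \in E\}$, and the standing assumption that no nonempty open subset of $X$ belongs to $\mathcal{I}$ prevents these $B_l$ from being trivially avoided. The technical heart of the argument is then to align the Fubini $\partial$-accumulation in the first $n-1$ coordinates with the base-case $\partial$-accumulation in the last coordinate, iterating the $\partial$-construction carefully enough that some $V_{\overline{x}^{(m)}}$ is forced to meet some $B_l$, yielding the forbidden $E$-edge.
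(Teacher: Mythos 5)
Your base case is correct, and your reduction of the inductive step---partitioning $W\setminus\partial W$ by a rational $\epsilon$ and a tuple of basic open sets of small diameter, so that it suffices to show that no ``coordinatewise $E$-independent'' set $V$ (one in which any two elements fail to be $E$-connected in some coordinate) can lie in $\mathcal{H}^n$---is indeed the first move of the standard argument (the paper itself does not prove this lemma; it quotes it from Todorcevic's preprint, Lemma 5.1 there). But the proof of that key claim is exactly where your write-up stops being a proof: the last paragraph describes the obstacle and expresses the hope that ``iterating the $\partial$-construction carefully enough'' will force some section $V_{\overline{x}^{(m)}}$ to meet some neighborhood $B_l$, without providing the mechanism. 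As set up, it cannot work: the points $\overline{x}^{(m)}$ are produced from $\overline{x}\in\partial T$, where $T=\{\overline{u}:V_{\overline{u}}\in\mathcal{H}\}$, and membership in $\partial T$ carries no information whatsoever about the particular open sets $B_l\subseteq\{z:(y,z)\in E\}$; two sets in $\mathcal{H}$ need have no $E$-edge between them, so nothing forces $V_{\overline{x}^{(m)}}\cap B_l\neq\emptyset$. (Also, the hypothesis that no nonempty open subset of $X$ is in $\mathcal{I}$ is not the relevant tool here.)

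The missing idea is to take boundaries of \emph{refined} sets, not of $T$ itself, and to do this for all basic open sets simultaneously before choosing any points. Concretely: for each basic open $U$ put $T_U:=\{\overline{u}\in T: V_{\overline{u}}\cap U\in\mathcal{H}\}$; apply the dimension-$(n-1)$ inductive hypothesis to each $T_U\in\mathcal{H}^{n-1}$ and remove from $T$ the countably many sets $T_U\setminus\partial T_U$ (and the $T_U$ that are not in $\mathcal{H}^{n-1}$). Since the dual of $\mathcal{H}^{n-1}$ is a $\sigma$-ideal, what remains is in $\mathcal{H}^{n-1}$, so one can fix a single $\overline{u}$ with the property that $\overline{u}\in\partial T_U$ for \emph{every} basic $U$ with $V_{\overline{u}}\cap U\in\mathcal{H}$. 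Next prune the fiber: let $Z$ be $V_{\overline{u}}$ minus all traces $V_{\overline{u}}\cap U\in\mathcal{I}$ ($U$ basic); then $Z\in\mathcal{H}$, and by the $n=1$ case pick $v\in Z\cap\partial Z$ and a witness $w\in Z$ with $(v,w)\in E$. Openness of $E$ gives a basic $U$ with $w\in U\subseteq\{z:(v,z)\in E\}$, and by the pruning $V_{\overline{u}}\cap U\in\mathcal{H}$, i.e.\ $\overline{u}\in T_U$. Now the boundary property of $\overline{u}$ \emph{with respect to this $T_U$} produces $\overline{u}'\in T_U$ coordinatewise $E$-connected to $\overline{u}$, and any $v'\in V_{\overline{u}'}\cap U$ gives $(v,v')\in E$; thus $\overline{u}^\frown v,\ \overline{u}'^\frown v'\in V$ are $E$-connected in every coordinate, contradicting independence. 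It is this double refinement (of $T$ by the sets $T_U$, and of the fiber by discarding its $\mathcal{I}$-small traces) that aligns the two accumulations you were trying to combine; without it the argument does not close.
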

\begin{definition}

Let $\theta$ be a regular cardinal such that $(X,d), \mathcal{G}, \mathcal{I}$ and $T^*$ belong to $H(\theta)$ and fix $<_w$ a well-order of $H(\theta)$.
\begin{enumerate}
    \item[(i)]  Let $X \in H(\theta)$, by $\mathcal{SK}(X)$ we denote the Skolem closure of $X$ (where the set of Skolem functions is defined using the  well-order $<_w$ of $H(\theta)$).
\item[(ii)]  If $M \prec H(\theta)$ is countable, by $M^+$ we denote $\mathcal{SK}(M \cup \{M\})$.
\end{enumerate}
\end{definition}
We are now ready to introduce our forcing notion. It is worth mentioning that while the idea of using successors of models in side conditions is a quite natural way to prove $(N,\P,T^*)$-genericity, it has been brought to our attention that this notion has already been considered by \cite{KuzeljavicTodorcevic2020} in a similar context.
\begin{definition}
Let $\P_{\mathcal{G}}$ be the collection of all pairs $p=(\mathcal{N}_p,f_p)$ satisfying the following conditions:
\begin{enumerate}[{\rm (i)}]
    \item $\mathcal{N}_p=\{N_0,\dots,N_{m}\}$ has the following properties:
    \begin{enumerate}[{\rm (a)}]
        \item For each $i\leq m$, $N_i \prec(H(\theta),\in,<_w)$ (countable)  containing $(X,d), \mathcal{G}, \mathcal{I}$ and $T^*$ as elements.

    \item $N_i\in N_i^+\in N_{i+1}$ for $i<m$.
    \end{enumerate}
\item $f_p:\mathcal{N}_p\to X$ such that
\begin{enumerate}[{\rm (a)}]
    \item For each $i\leq m,$ $f_p(N_i)\in N_{i+1}\setminus N_i^+$ (where $N_{m+1}=X)$.
    \item For each $i\leq m$, $f_p(N_i)\notin \bigcup (\mathcal{I}\cap N_i^+)$.
    \item For each $i<j\leq m$, $(f_p(N_i),f_p(N_j))\in E.$
\end{enumerate}
\end{enumerate}
Let $q\leq p$ if $f_p\subseteq f_q$.
\end{definition}
The next Lemma is straightforward and appears implicit in \cite{Todorcevic2011}. We just write it to have it for future reference.
\begin{lemma}
    Let $\{N_0,\dots,N_{m}\}$ be an increasing $\in$-chain of countable elementary submodels of $H(\theta)$ containing all relevant parameters and let $(x_0,\dots,x_{m})\in X^{m+1}$ be such that $x_i\in N_{i+1}\setminus N_i$ (where $N_{m+1}=X)$ and $x_i\notin \bigcup (N_i\cap \mathcal{I})$ for $i\leq m.$ If $W\subseteq X^{m+1}$ is such that $W\in N_0 $ and $(x_0,\dots,x_m)\in W$, then $W\in \mathcal{H}^{m+1}.$
\end{lemma}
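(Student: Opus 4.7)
The plan is to prove the lemma by induction on $m$. The base case $m = 0$ is essentially the whole idea, isolated: if $W \subseteq X$ belonged to $N_0$ and contained $x_0$ yet failed to lie in $\mathcal{H}$, then $W$ would be an element of $N_0 \cap \mathcal{I}$, forcing $x_0 \in \bigcup(N_0 \cap \mathcal{I})$ and contradicting the hypothesis on $x_0$.

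For the inductive step, I would introduce the auxiliary set
$$V := \{\overline{x} \in X^m : W_{\overline{x}} \in \mathcal{H}\},$$
which belongs to $N_0$ since it is definable from the parameters $W$ and $\mathcal{H}$, both of which are in $N_0$. By the recursive definition of the Fubini power, establishing $W \in \mathcal{H}^{m+1}$ reduces to establishing $V \in \mathcal{H}^m$. Invoking the inductive hypothesis on the shorter chain $N_0, \ldots, N_{m-1}$, the tuple $(x_0, \ldots, x_{m-1})$, and the set $V$, the task further reduces to checking that $(x_0, \ldots, x_{m-1}) \in V$, that is, that the fibre $U := W_{(x_0, \ldots, x_{m-1})}$ belongs to $\mathcal{H}$.

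This is exactly where the full chain is used. The fibre $U$ lies in $N_m$, because $W \in N_0 \subseteq N_m$ and each $x_i$ with $i < m$ belongs to $N_{i+1} \subseteq N_m$. Since $x_m \in U$ and $x_m \notin \bigcup(N_m \cap \mathcal{I})$, we cannot have $U \in \mathcal{I}$; hence $U \in \mathcal{H}$, so $(x_0, \ldots, x_{m-1}) \in V$, as required. In effect, the argument is nothing but an iteration of the base-case reflection principle, carried out one model at a time along the chain.

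The only bookkeeping point to watch is that when invoking the inductive hypothesis for the shorter chain $N_0, \ldots, N_{m-1}$, the convention $N_{(m-1)+1} = X$ corresponds to requiring $x_{m-1} \in X \setminus N_{m-1}$, which is strictly weaker than the original hypothesis $x_{m-1} \in N_m \setminus N_{m-1}$. I do not foresee any real obstacle; the entire lemma is a clean inductive unpacking of the elementary fact that a countable submodel absorbs all ideal-membership information about the sets it contains.
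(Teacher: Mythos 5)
Your proof is correct: the paper states this lemma without proof (calling it straightforward and implicit in Todorcevic's preprint), and your induction --- peeling off the last coordinate, noting that the fibre $U=W_{(x_0,\dots,x_{m-1})}$ lies in $N_m$ and cannot be in $\mathcal{I}$ since $x_m\in U$ and $x_m\notin\bigcup(N_m\cap\mathcal{I})$, then applying the inductive hypothesis to $V=\{\overline{x}\in X^{m}:W_{\overline{x}}\in\mathcal{H}\}\in N_0$ --- is exactly the intended argument. The only point left implicit is that $x_i\in N_{i+1}\subseteq N_m$ for $i<m$, which rests on the standard fact that a countable elementary submodel of $H(\theta)$ contains as a subset every countable set it contains as an element; this is routine but worth a half-line when writing it up.
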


\begin{theorem}
    The axiom  $\PFA(T^*)$ implies $\OGA$.
\end{theorem}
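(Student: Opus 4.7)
The plan is to prove that $\P_\mathcal{G}$ is $T^*$-proper and adds an uncountable $E$-clique, and then invoke $\PFA(T^*)$. Once $T^*$-properness is established, applying $\PFA(T^*)$ to the dense open sets $D_\alpha := \{p \in \P_\mathcal{G} : |\mathcal{N}_p| \geq \alpha\}$ for $\alpha < \al$ yields a filter $G$ whose union $Y := \bigcup_{p \in G} \ran(f_p)$ is uncountable (as $T^*$-proper forcings preserve $\al$) and satisfies $[Y]^2 \subseteq E$ by clause (ii)(c) of the definition of $\P_\mathcal{G}$. This realizes the first alternative of $\OGA$ under the standing assumption that $\mathcal{G}$ is not countably chromatic.

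The crux of the proof is thus $T^*$-properness. I would fix a sufficiently large regular $\theta^*$ and a countable $M \prec H(\theta^*)$ containing $\P_\mathcal{G}$, $T^*$, $(X,d)$, $\mathcal{G}$, $\mathcal{I}$ and $H(\theta)$, and set $\delta := M \cap \al$. Given $p \in M \cap \P_\mathcal{G}$, I would construct, in close analogy with the $\MRP$ proof, a descending sequence $p_n \in M \cap \P_\mathcal{G}$ meeting all dense open sets of $M$, by enumerating $4$-tuples $(D_n, x_n, \dt{A}_n, i_n)$---with $x_n \in T^*_\delta$, $\dt{A}_n \in M$ a $\P_\mathcal{G}$-name for a subset of $T^*$, and $i_n \in 3$ coding the relevant case---and handling the $i_n \ne 0$ case by way of a dichotomy paralleling Lemma \ref{keymrp2}. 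I would then set $q := (\bigcup_n \mathcal{N}_{p_n} \cup \{M \cap H(\theta)\}, \bigcup_n f_{p_n} \cup \{(M \cap H(\theta), z)\})$ for a suitably chosen $z \in X$.

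The key dichotomy to prove is the following analog of Lemma \ref{keymrp2}: for $p \in M \cap \P_\mathcal{G}$, a $\P_\mathcal{G}$-name $\dt{A} \in M$ for a subset of $T^*$, $x \in T^*_\delta$, and $R \in \{<_{T^*}, <_{T^*}^- \}$, either there exist $y R x$ and $r \leq p$ in $M$ with $r \forces y \in \dt{A}$ whose new chain models all lie strictly below $M \cap H(\theta)$, or else some topping-off extension $p' \leq p$ forces $x \notin \dt{A}$. The idea is: assuming the first alternative fails, for every $y R x$ no $M$-extension forces $y \in \dt{A}$; an iterated application of Todorcevic's $\partial W$ Lemma on the Fubini powers $\mathcal{H}^n$, combined with the avoidance requirement (ii)(b) in the definition of $\P_\mathcal{G}$, produces $z \in X \setminus (M \cap H(\theta))^+$ making the topping-off extension $p'$ force $y \notin \dt{A}$ for every $y R x$. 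Then Krueger's Lemma \ref{lemmaMRP}, applied using that every $x \in T^*_\delta$ is automatically $(M, T^*)$-generic by \cite[Lemma 1.5]{Krueger2020}, yields $p' \forces x \notin \dt{A}$.

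The principal obstacle will be the interaction of the side-condition structure with the $T^*$-genericity clause. Extending a condition $p \in M$ by topping off with $M \cap H(\theta)$ requires simultaneously choosing $z \in X$ that lies outside $(M \cap H(\theta))^+$, is $E$-connected to every existing image $f_{p_n}(N)$ in the fused chain, avoids $\bigcup(\mathcal{I} \cap (M \cap H(\theta))^+)$, and respects all accumulated forcing-decisions from the construction. This is precisely where the use of $N^+$ rather than $N$ in clauses (i)(b) and (ii)(a) of the definition pays off, as foreshadowed in the remark preceding the definition of $\P_\mathcal{G}$: $N^+$ captures exactly the Skolem-closure data needed to ensure that $\mathcal{I}$-sets coded via $N$ are avoided automatically, preserving $\mathcal{H}$-positivity of the set of admissible $z$'s throughout the fusion. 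Once this is verified, $q$ is a legitimate $(M, \P_\mathcal{G}, T^*)$-generic extension of $p$, establishing $T^*$-properness.
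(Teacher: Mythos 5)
Your overall plan (prove $\P_\mathcal{G}$ is $T^*$-proper, then apply $\PFA(T^*)$ to suitable dense sets) is the right one, but the way you propose to prove $T^*$-properness does not work for this forcing. Conditions of $\P_\mathcal{G}$ are \emph{finite}: $\mathcal{N}_p=\{N_0,\dots,N_m\}$ is a finite $\in$-chain and $f_p$ is a finite function. Consequently the MRP-style fusion template does not transfer: a strictly decreasing sequence $\langle p_n:n<\omega\rangle$ meeting all dense sets of $M$ has no lower bound, and the object $q:=(\bigcup_n\mathcal{N}_{p_n}\cup\{M\cap H(\theta)\},\bigcup_n f_{p_n}\cup\{(M\cap H(\theta),z)\})$ you write down is simply not a condition. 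More importantly, even if one could form such a $q$, pre-meeting dense sets inside $M$ cannot establish $(M,\P_\mathcal{G},T^*)$-genericity here, because an arbitrary extension $r\leq q$ may add finitely many new models and points above $M$ that no fusion carried out inside $M$ can anticipate; the genericity clause must be verified below each such $r$ separately. This is exactly how the paper proceeds: the generic condition is just the one-step extension $q=(\mathcal{N}_p\cup\{M\},f_p\cup\{(M,q(M))\})$ for a carefully chosen point $q(M)$, and genericity is proved by taking an arbitrary $r\leq q$ with $r\forces a\in\dot A$, forming the set $W$ of value-tuples of end-extensions of $r\res M$ in $D$ forcing $a\in\dot A$, noting $W\in N_0^+=M^+$ (this is where the $N^+$'s in the side conditions are used, since $a\in T^*_\delta\subseteq M^+$), applying Todorcevic's $\partial W$ lemma together with openness of $E$ to reflect $r$'s top values into basic open sets, and then using $(M,T^*)$-genericity of $a$ to find $b\mathrel{R}a$ and $s\in D\cap M$ forcing $b\in\dot A$ that is \emph{compatible} with $r$ by a direct check of clause (ii)(c).

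Your proposed key dichotomy (the analogue of Lemma \ref{keymrp2}) is also unjustified in its second alternative: there is no mechanism by which adding a single pair $(M\cap H(\theta),z)$ to a finite condition could force $x\notin\dot A$. In $\P_\Sigma$ the negative decision is achieved by enlarging the \emph{countable} second coordinate $F_p$, which constrains all future models, combined with openness of $\Sigma(M\cap H(\theta))$ via $[F,N]\subseteq\Sigma(M\cap H(\theta))$; $\P_\mathcal{G}$ has no analogous coordinate, and choosing one point $z\notin(M\cap H(\theta))^+$ places no constraint ruling out extensions that force $x\in\dot A$. The paper never needs such a negative decision: it reduces without loss of generality to the case $r\forces a\in\dot A$ by replacing $\dot A$ with the canonical name for $T^*$. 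Finally, a minor but symptomatic point: your dense sets $D_\alpha=\{p:|\mathcal{N}_p|\geq\alpha\}$ are empty for infinite $\alpha$; one must instead use sets like $D_\alpha=\{p:\alpha\in\bigcup\dom(p)\}$, whose density itself requires the point-selection argument (finding a new value $E$-connected to the old ones and outside $\bigcup(\mathcal{I}\cap \bar N^+)$) that the paper carries out when defining $q(M)$ and which your proposal leaves entirely unaddressed.
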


\begin{proof}
    We shall prove that $\P_\mathcal{G}$ is $T^*$-proper and adds an uncountable subset $Y$ of $X$ with $[Y]^2\subseteq E$.

    Let $\theta^*$ be a large enough regular cardinal such that $\P_\mathcal{G}, T^*, H(\theta), <_w, \mathcal{I},\mathcal{H}, (X,d)$  and $\mathcal{G} $ belong to $H(\theta^*)$. Let $\overline{M}\prec H(\theta)$ countable which contains all relevant parameters. Consider $p\in \overline{M}\cap \P_\mathcal{G}$. We will find $q\leq p$ which is $(M,\P_\mathcal{G},T^*)$-generic. Set  $M:=\overline{M}\cap H(\theta)$ and $\delta=M\cap\al$. Define $q=(\mathcal{N}_p\cup \{M\},f_p\cup\{(M,q(M))\})$ where  $q(M)$ is any element of $X\setminus \bigcup (\mathcal{I}\cap M^+)$ and such that $(q(M),p(N))\in E$ for any $N\in\mathcal{N}_p$. Let us see that such a $q(M)$ exists. Let $\overline{N}$ be the maximum element of $\mathcal{N}_p$. Fix a countable basis $\mathcal{B}$ of $(X,d)$ in $M$ and  pick a basic open set $U\in\mathcal{B}$ such that $p(\overline{N})\in U$ and $(p(N),x)\in E$ for all $N\in\mathcal{N}_p\cap \overline{N}$ and $x\in U$.
Let $Z$ be the set of all elements of $x\in U$ such that for any $y\in U$  $(x,y)\notin E$. Notice that $Z\in \overline{N}\cap \mathcal{I}$. It follows, from clause (ii)(b) of the definition of the forcing, that $p(\overline{N})\notin Z$. Pick $y\in U$ such that $(y,p(\overline{N}))\in E$. Since $E$ is open there is a $V\in \mathcal{B}$ such that $y\in V\subseteq U$ and $V\times \{x\}\subseteq E.$ Let $q(M)$ be any element in $V\setminus (\bigcup (M^+\cap \mathcal{I})\cup M^+)$ (this is possible since $\mathcal{B}\subseteq \mathcal{H})$.

We claim that $q$ is a $(M,\P_\mathcal{G},T^*)$-generic. Fix a dense open set $D\in M$, $a\in T^*_\delta $ and $\dot A\in M$ a $\P_\mathcal{G}$-name for a subset of $T^*$ and a relation $R\in\{<_{T^*},<_{T^*}^-\}$. Let $r\leq q$. By extending $r$ further, if necessary, we may assume that $r \in D$, $r$ decides whether or not $a \in\dot A$. If $r$ forces that $a \notin\dot A$, then replace $\dot A$ in what follows by the canonical $\P_\mathcal{G}$-name for $T^*$. Thus, we may assume without loss of generality that $r$ forces that $a \in \dot A$.

We need to find $s\in D\cap M$ and $bRa$ such that $s$ is compatible with $r$ and $s\Vdash b\in\dot A$.  Let $r_M:=(\mathcal{N}_r\cap M, f_r\restriction\mathcal{N}_r\cap M).$ It follows that $r_M\in \P_\mathcal{G}\cap M$ and $r$ is an end-extension of $r_M$. Let $m=|\mathcal{N}_r\setminus M|$ and $M=N_0,\dots,N_{m-1}$ denote the increasing enumeration of  $\mathcal{N}_r\setminus M$. Let $W$ be the set of all $m$-tuples $(x_0,\dots,x_{m-1}) $ such that there is an end-extension $t$ of $r_M$ in $D$ such that
\begin{enumerate}[{\rm(a)}]
    \item
    $|\mathcal{N}_t\setminus\mathcal{N}_{r_M}|=m$
    \item if $K_0,\dots,K_{m-1}$ is the increasing enumeration of $\mathcal{N}_t\setminus\mathcal{N}_{r_M}$ then $t(K_i)=x_i$ for $i<m$.
    \item $t\Vdash a\in \dot A$.
\end{enumerate}
Using that $E$ is open, fix basic open sets $U_i (i<m)$ in $\mathcal{B}$ such that $r(N_i)\in U_i$ and $U_i\times U_j\subseteq E$ for $i\ne j \in m.$
Here is the key point of the definition of the forcing, since $a\in N_0^+$, then $W\in N_0^+$ and $(r(N_0),\dots,r_{m-1}(N_{m-1}))\in W$. Thus it follows, from Lemma 4.7 and Lemma 4.4, that $\partial W\in \mathcal{H}^m $ and $(r(N_0),\dots,r_{m-1}(N_{m-1}))\in \partial W.$ Pick $\varepsilon>0$ such that $B_{\varepsilon}(r(N_i))\subseteq U_i$ for all $i<m$. From the definition of $\partial W$ we infer that there is a tuple $(x_0,\dots,x_{m-1})\in W$ such that $(r(N_i),x_i)\in E$ and $d(p(N_i),x_i)<\varepsilon$ for all $i<m$. Using, once more,  that $E$ is open find basic open sets $V_i (i<m)$ such that $x_i\in V_i\subseteq U_i$ and $V_i\times \{r(N_i)\}\subseteq E$ for $i<m$. Let $t$ be  a witness that  $(x_0,\dots,x_{m-1})\in W$ (we do not claim that $t$ is compatible with $r$).

Let $B$ denote the set of nodes $c\in T^*$ such that there is an $s\in D$ such that $s\Vdash c\in \dot A$
and $(s(K_0),\dots,s(K_{m-1}))\in V_1\times\dots\times V_{m-1}$. Notice that $t$  witness that $a\in B$. Since $a$ is $(M,T^*)$-generic, there exists $bRa$ such that $b\in B$.  By elementarity, we can find such a $b$ in $M$. Let $s$ in $M$ be a witness that $b\in B$. We claim that $s$ and $r$ are compatible. Let $t:=s\cup r$. We will show that $t\in \P_\mathcal{G}$. Then clearly $t\leq r,s$. It is easy to check that $t$ satisfies all properties for being in $\P_\mathcal{G}$ except perhaps clause (ii)(c). To see this, it is sufficient to show that for any $i,j<m,$ $(r(N_i),s(K_j))\in E.$ We proceed by cases. On one hand, if $i\ne j$, then $(r(N_i),s(K_j))\in U_i\times U_j\subseteq E$. On the other hand, if $i=j$, then $(r(N_i),s(K_j))\subseteq \{r(N_i)\}\times V_i\subseteq E$. This completes the proof that $q$ is $(M,\P_\mathcal{G},T^*)$-generic.

Finally, observe that for each $\alpha<\al,$ the set $D_\alpha:=\{p: \alpha\in \bigcup \dom(p)\} $ is dense-open. Applying $\PFA(T^*)$, to $\P_\mathcal{G}$ and to the above sequence of dense open sets, we get a filter $G$ such that $Y:=\ran(\bigcup G)$ is an uncountable subset of $X$ which generates a complete subgraph of $\mathcal{G}.$ This concludes the proof of the Theorem.
\end{proof}

\section{Club isomorphisms}\label{sec:club-iso}

A further question posed in \cite{Krueger2020}, which is
attributed to Moore, is whether $\PFA(T^{*})$ implies that any two
normal almost Suslin trees are club-isomorphic. This is a natural
question, since a positive answer  would imply that the choice of $T^{*}$ is somewhat
canonical. We answer this negatively by showing that it is consistent with
$\PFA(T^{*})$ that not all normal almost Suslin trees are club-isomorphic.
As a counterpart to this ``negative'' result, we show that
$\PFA(T^{*})$ does imply that any two normal special Aronszajn trees are
club-isomorphic.

Let us begin with the ``positive'' result. Recall that an Aronszajn tree is called \emph{normal} if it has a
unique minimal element, it is Hausdorff, and every node has
$\aleph_{0}$ extensions in any level above it. Two $\omega_{1}$-trees
$T$ and $S$ are called \emph{club-isomorphic} if for some club $C \seb
\omega_{1}$, $T \res C \cong S \res C$. Abraham and Shelah
\cite{ShelahAbraham} showed that for any two normal Aronszajn trees
there is a proper forcing that introduces a club isomorphism between
them, therefore $\PFA$ implies that all normal Aronszajn trees are
club-isomorphic. Abraham-Shelah's forcing
is defined as follows:
Fix $T$ and $S$ normal Aronszajn trees, and let $\P(T,S)$  be the
forcing notion consisting of all the pairs $p = (A_{p},f_p)$ where $A_{p} \in
[\omega_{1}]^{<\omega}$, and $f_{p}$ is a finite function such that
$\dom(f_{p})$ is a downwards closed subtree of $T \res A_{p}$
in which every maximal branch has the same length,
$\ran(f_{p})$ is a downwards closed subtree of $S \res A_{p}$,
$f_{p}$ is increasing, and for every $x \in T$, $\hht_{T}(x) = \hht_{S}(f(x))$.
Let $(A_{q},f_{q}) \le (A_{p},f_{p})$ if $A_{q} \supseteq A_{p}$ and
$f_{q} \supseteq f_{p}$. The following
proposition can be found in \cite{Krueger2025}, and it is also implicit
in \cite{ShelahAbraham}.

\begin{proposition}\label{lem:compat-cond}
  Let $p,q \in \P(T,S)$ and $\alpha < \beta < \omega_{1}$ be such that:
  \begin{itemize}\itemsep0em
    \item $\alpha \in A_{p}$ and $\beta \in A_{q}$.
    \item $A_{p} \seb \beta$ and $A_{p} \cap \alpha = A_{q} \cap \beta$.
    \item $p \res \alpha = q \res \beta$.
    \item Every two elements $\dom(f_{q}) \cap T_{\beta}$
    split before $\alpha$ and every two elements of
    $\ran(f_{q}) \cap S_{\beta}$ split before $\alpha$.
    \item Every element of $\dom(f_{p}) \cap T_{\alpha}$ is incomparable
    with every element of $\dom(f_{q}) \cap T_{\beta}$, every element
    of $\ran(f_{p}) \cap S_{\alpha}$ is incomparable with
    every element of $\ran(f_{q}) \cap S_{\beta}$.
  \end{itemize}
  Then $p$ and $q$ are compatible.
\end{proposition}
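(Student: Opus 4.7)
The plan is to construct a common extension $r$ of $p$ and $q$ explicitly, with $A_r := A_p \cup A_q$ and $f_r$ defined as an extension of $f_p \cup f_q$. For the argument it is convenient to treat the typical case $\alpha = \max A_p$ and $\beta = \max A_q$; the agreement $p \res \alpha = q \res \beta$ then says $p$ and $q$ share a common ``lower part'' below level $\alpha$, and one just needs to amalgamate the top levels.

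The first key addition to $f_p \cup f_q$ is forced by downward-closure. Since $\alpha \in A_r$ but $\alpha \notin A_q$, every $u \in \dom(f_q) \cap T_\beta$ has a predecessor $u \res \alpha \in T_\alpha$ that must be added to $\dom(f_r)$. I would set $f_r(u \res \alpha) := f_q(u) \res \alpha$. Two observations make this work: by the fourth hypothesis (splitting before $\alpha$ in both $T$ and $S$) the maps $u \mapsto u \res \alpha$ and $v \mapsto v \res \alpha$ are injective on $\dom(f_q) \cap T_\beta$ and $\ran(f_q) \cap S_\beta$ respectively; and by the fifth hypothesis the set $\{u \res \alpha : u \in \dom(f_q) \cap T_\beta\}$ is disjoint from $\dom(f_p) \cap T_\alpha$ (otherwise some $t \in \dom(f_p) \cap T_\alpha$ would lie strictly below some $u \in \dom(f_q) \cap T_\beta$, contradicting their incomparability), and similarly in $S$.

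The second addition is forced by the requirement that every maximal branch of $\dom(f_r)$ have the same length. The maximal branches of $\dom(f_p)$ end at level $\alpha$, those of $\dom(f_q)$ end at $\beta$, so I must extend the former up to level $\beta$. Using normality of $T$ and $S$, for each $t \in \dom(f_p) \cap T_\alpha$ pick a level-$\beta$ extension $t'$ in $T$ that avoids the finite set $\dom(f_q) \cap T_\beta$, along with a level-$\beta$ extension $s'$ of $f_p(t)$ in $S$ that avoids $\ran(f_q) \cap S_\beta$; all such choices can be made pairwise distinct since every node has infinitely many successors at any higher level. Set $f_r(t') := s'$ and let $f_r$ be the union of $f_p \cup f_q$ with the two new pieces just defined.

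The remaining work is verification. One checks that $f_r$ is well-defined (using $p \res \alpha = q \res \beta$ to resolve the overlap between $f_p$ and $f_q$), that $\dom(f_r)$ and $\ran(f_r)$ are downward-closed subtrees of $T \res A_r$ and $S \res A_r$ with every maximal branch at level $\beta$, and that $f_r$ preserves heights. The main obstacle I anticipate is the order-isomorphism property, specifically verifying that incomparability in $\dom(f_r)$ maps to incomparability in $\ran(f_r)$. The critical case pits $t \in \dom(f_p) \cap T_\alpha$ against $u \res \alpha$ for $u \in \dom(f_q) \cap T_\beta$ at the common level $\alpha$: the images are $f_p(t)$ and $f_q(u) \res \alpha$, and one must rule out that these coincide in $S_\alpha$. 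This boils down to the ``range'' half of the fifth bullet together with the identity $f_q(v) \res \xi = f_q(v \res \xi)$ whenever $\xi \in A_q$ and $v \res \xi \in \dom(f_q)$, itself a consequence of $f_q$ being increasing and height-preserving.
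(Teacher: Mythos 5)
The paper itself does not prove this proposition; it is quoted from Krueger's paper \cite{Krueger2025} and noted to be implicit in \cite{ShelahAbraham}, so there is no in-paper argument to compare yours against. Your construction is the expected direct amalgamation, and as far as it goes it is correct: the splitting-before-$\alpha$ hypothesis makes $u\mapsto u\res\alpha$ injective on $\dom(f_{q})\cap T_{\beta}$ and $v\mapsto v\res\alpha$ injective on $\ran(f_{q})\cap S_{\beta}$; the incomparability hypothesis gives disjointness of $\{u\res\alpha : u\in\dom(f_{q})\cap T_{\beta}\}$ from $\dom(f_{p})\cap T_{\alpha}$ and, on the range side, rules out $f_{p}(t)=f_{q}(u)\res\alpha$, which is indeed the one delicate point; and normality lets you extend the maximal nodes of $\dom(f_{p})$ to level $\beta$ while avoiding the finitely many nodes of $\dom(f_{q})\cap T_{\beta}$ and $\ran(f_{q})\cap S_{\beta}$. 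Your identification of the identity $f_{q}(v)\res\xi=f_{q}(v\res\xi)$ (from increasing plus height-preserving plus downward closure) as the engine of the verification is also right.

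The one genuine issue is your reduction to the ``typical case'' $\alpha=\max A_{p}$, $\beta=\max A_{q}$. The proposition is stated, and used, without that assumption: in the properness proof of the paper the condition playing the role of $q$ is an arbitrary extension $r$, which may have levels above $\delta$ (the role of $\beta$), and the condition $s$ found in $N$, playing the role of $p$, is only required to have $\beta$ as the \emph{least} element of $A_{s}$ above the splitting level, so it may have levels above that as well. Hence $A_{p}$ may meet $(\alpha,\beta)$ and $A_{q}$ may meet $(\beta,\omega_{1})$, and as written your argument does not cover the instances actually applied. The repair uses exactly your ideas but at every extra level: for each $u\in\dom(f_{q})$ of height at least $\beta$ and each $\gamma\in A_{p}\cap[\alpha,\beta)$ add $u\res\gamma\mapsto f_{q}(u)\res\gamma$ (well defined because two such $u$ either have the same restriction to level $\beta$ or, by the fourth hypothesis, split before $\alpha$; disjoint from $\dom(f_{p})$ and with image disjoint from $\ran(f_{p})$ because any coincidence at a level in $[\alpha,\beta)$ would place an element of $\dom(f_{p})\cap T_{\alpha}$ below one of $\dom(f_{q})\cap T_{\beta}$, or the analogous statement in $S$, contradicting the fifth hypothesis); note also that every node of $\dom(f_{p})$ of height in $[\alpha,\beta)$ is automatically incomparable with every node of $\dom(f_{q})$ of height at least $\beta$, for the same reason. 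Finally, each maximal node of $\dom(f_{p})$ (which sits at level $\max A_{p}$, not necessarily $\alpha$) must be extended through \emph{all} levels of $A_{q}\setminus\beta$, not just $\beta$, which is again possible by normality after choosing the level-$\beta$ extensions off $\dom(f_{q})\cap T_{\beta}$ and $\ran(f_{q})\cap S_{\beta}$. With these adjustments your verification goes through essentially verbatim.
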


\begin{lemma}
  If $T$ and $S$ are special Aronszajn trees,
  then $\P(T,S)$ is $T^{*}$-proper.
\end{lemma}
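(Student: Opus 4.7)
The plan is to adapt the Abraham--Shelah proof that $\P(T,S)$ is proper and augment it with a dichotomy of the type of Lemma~\ref{keymrp2} to handle the $T^*$-side. Fix a sufficiently large regular $\theta^*$, a countable $M \prec H(\theta^*)$ containing $\P(T,S),T,S,T^*$ as elements, set $\delta := M\cap \omega_1$, and fix $p \in M\cap\P(T,S)$.

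First I would construct a master condition $q \leq p$ in the Abraham--Shelah style: put $A_q := A_p \cup\{\delta\}$ and, for each node $t$ of maximal height in $\dom(f_p)$, adjoin a pair $(\tilde{t},\tilde{s})\in T_\delta\times S_\delta$ extending $(t,f_p(t))$, where the branches are chosen with the aid of the specializing functions of $T$ and $S$. For any $r \le q$, the shadow $r_M := (A_r \cap \delta,\; f_r\res\{u:\hht u<\delta\})$ is readily checked to be a condition in $M$ with $r \le r_M \le p$. The Abraham--Shelah analysis yields two key facts: (a) $q$ is $(M,\P(T,S))$-generic, and (b) every $s' \in M$ below $r_M$ sharing the ``shape above $r_M$'' of $r$ is compatible with $r$ via Proposition~\ref{lem:compat-cond}. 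Here the shape is a finite combinatorial parameter that records the number of new heights in $A_r\setminus A_{r_M}$, the number of new nodes at each, and their abstract tree and function structure (with labels into $r_M$); it therefore lies in $M$.

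Next I would establish a dichotomy mirroring Lemma~\ref{keymrp2}: for $p' \in M\cap\P(T,S)$, $x \in T^*_\delta$, a $\P(T,S)$-name $\dot{A} \in M$ for a subset of $T^*$, and $R\in\{<_{T^*},<_{T^*}^-\}$, either (i) there exist $y\,R\,x$ and $s'\le p'$ in $M$ with $s' \Vdash y\in \dot{A}$, or (ii) $p' \Vdash x\notin \dot{A}$. The argument follows that of Lemma~\ref{keymrp2}: if (i) fails, then every $y$ with $y\,R\,x$ lies in $M$ (since $\hht y<\delta$ forces $y\in M$), so by elementarity $p'\Vdash y\notin\dot{A}$ for every such $y$; applying Lemma~\ref{lemmaMRP} to $B := \{z\in T^*:p'\Vdash z\notin\dot{A}\}\in M$, together with the fact that $x$ is $(M,T^*)$-generic (automatic because $T^*$ is almost Suslin), yields $x\in B$.

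To combine these for $(M,\P(T,S),T^*)$-genericity of $q$: given $r \le q$ with $r \Vdash x \in \dot{A}$, refine the dichotomy by additionally requiring that $s'$ in alternative (i) match the shape $\sigma$ of $r$ above $r_M$. Concretely, form
\[
B^*:=\{z\in T^*:\exists s'\le r_M\text{ of shape }\sigma,\ s' \Vdash z\in \dot{A}\}\in M.
\]
$r$ itself witnesses that $x\in B^*$, so applying Lemma~\ref{lemmaMRP} to $T^*\setminus B^*$ delivers $y\,R\,x$ with $y\in B^*$, and elementarity produces such an $s'\in M$. Fact (b) of the master-condition step then furnishes a common extension $s\le r, s'$, and $s \Vdash y\in \dot{A}$, as required.

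The main obstacle is fact (b): one must design the level-$\delta$ data of $q$ so that Proposition~\ref{lem:compat-cond} certifies compatibility of $r$ with every shape-matching $s'\in M$ below $r_M$. Specialness of $T$ and $S$ enters precisely here—through the specializing functions one controls the ancestries of $\tilde{t}$ and $\tilde{s}$ so that the splitting and incomparability clauses of Proposition~\ref{lem:compat-cond} are validated. This is the step that forces the hypothesis to demand specialness rather than mere normality.
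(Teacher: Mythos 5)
Your overall architecture (a master condition with $A_q=A_p\cup\{\delta\}$, the shadow $r_M\in M$, reflecting via the $(M,T^*)$-genericity of the node, then amalgamating with Proposition~\ref{lem:compat-cond}) matches the paper's, and your dichotomy step via Lemma~\ref{lemmaMRP} is sound. The genuine gap is your fact (b). As you define it, the shape $\sigma$ records only the abstract structure of $r$ above $r_M$ with labels into $r_M$; it carries no information about where the new nodes of a shape-matching $s'\in M$ sit relative to the actual nodes of $\dom(f_r)\cap T_\delta$ and their $f_r$-images. Concretely, if $\hat t\in\dom(f_r)\cap T_\delta$ and $\hat s=f_r(\hat t)$, there is a legitimate $s'\le r_M$ in $M$ of the same shape whose new node is $\hat t\res\beta$ for some $\beta<\delta$ while $f_{s'}(\hat t\res\beta)\ne \hat s\res\beta$; any common extension of $s'$ and $r$ would have to map $\hat t\res\beta$ below $\hat s$, so $s'\perp r$ and (b) fails. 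This cannot be repaired by ``designing the level-$\delta$ data of $q$'': an arbitrary $r\le q$ may introduce nodes of $T_\delta$ that $q$ never mentioned, and in any case the shape, being an object of $M$, cannot exclude reflected conditions whose new nodes lie on the (cofinal in $\delta$) branches below $r$'s level-$\delta$ nodes with mismatched $S$-side images. Your remark that ``specialness enters in the choice of the ancestries of $\tilde t,\tilde s$'' points at the wrong place: no choice of the master condition's top nodes can do this job.

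What is missing is exactly how the paper uses specialness, namely as extra \emph{parameters of the reflected set} attached to the actual top level of $r$. Fix a specializing map $g:T\cup S\to\omega$, enumerate $\dom(f_r)\cap T_\delta$ as $\hat t_0,\dots,\hat t_{n-1}$, record $p_i=g(\hat t_i)$ and $q_i=g(f_r(\hat t_i))$, and fix $\alpha<\delta$ above $\max(A_r\cap\delta)$ past which the $\hat t_i$ (and their images) pairwise split, together with $\check t_i=\hat t_i\res\alpha$ and $\check s_i=f_r(\hat t_i)\res\alpha$; all of these lie in $M$. The set to be reflected must require of $s'$ (besides $s'\in D$, $s'\Vdash z\in\dot A$, agreement with $r\res\delta$, and new nodes only above $\alpha$) that its $n$ top nodes $t_i$ satisfy $g(t_i)=p_i$, $g(f_{s'}(t_i))=q_i$, and $t_j\perp\check t_i$, $f_{s'}(t_j)\perp\check s_i$ for $i\ne j$. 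Then incomparability with $r$'s top nodes follows: for $i=j$ because equal $g$-values at different heights force incomparability (this is where specialness is used), and for $i\ne j$ from incomparability with $\check t_i$, after which Proposition~\ref{lem:compat-cond} gives compatibility. With your notion of ``shape'' strengthened to include these data, your $B^*$ argument becomes the paper's proof; without them, the compatibility claim it rests on is false.
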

\begin{proof}
  Let $\P := \P(T,S)$.
  Let $g : T \cup S \to \omega$ witness that both $T$ and $S$
  are special.
  Let $\theta$ be a large enough regular cardinal and let $N \prec H(\theta)$ be
  countable such that $T,S,\P,g \in N$. Fix $p \in \P \cap N$
  and define $\delta := N \cap \omega_{1}$. Let
  $q \le p$ be any extension such that $A_{q} := A_{p} \cup \{\delta\}$. We claim that $q$ is $(N,\P,T^*)$-generic. To see this, fix a dense set $D\in M$, $a\in T^*_\delta $, $\dot A\in M$ a $\P$-name for a subset of $T^*$, and a relation $R\in\{<_{T^*},<_{T^*}^-\}$. Let $r\leq q$. By extending $r$ further, if necessary, we may assume that $r \in D$, $r$ decides whether or not $a \in\dot A$. If $r$ forces that $a \notin\dot A$, then replace $\dot A$ in what follows by the canonical $\P$-name for $T^*$. Thus, we may assume without loss of generality that $r$ forces that $a \in \dot A$.

We need to find $s\in D\cap M$ and $b \mathrel{R} a$ such that $s$ is compatible with $r$ and $s\Vdash b\in\dot A$.

  Let $n = |\dom(f_{r}) \cap T_{\delta}|$,
  and $(\hat{t}_{0},\dots,\hat{t}_{n-1})$ be some enumeration
  of $\dom(f_{r}) \cap T_{\delta}$. For each
  $i < n$, let
  $p_{i} := g(\hat{t}_{i})$ and $q_{i} := g(f_{r}(\hat{t}_{i}))$.
  Let $r_{0} := r \res \delta$,
  and let $\alpha$ be such that
  $\max(A_{r_{0}}) < \alpha < \delta$, and such that
  for all $i \neq j < n$, both $\hat{t}_{i}$ with $\hat{t}_{j}$
  and $\hat{s}_{i}$ with $\hat{s}_{j}$ split before $\alpha$.
  For every $i < n$, let $\ck{t}_{i} \in T_{\alpha}$ and
  $\ck{s}_{i} \in S_{\alpha}$ be such that $\ck{t}_{i} <_{T} \hat{t}_{i}$ and
  $\ck{s}_{i} <_{S} f_{r}(\hat{t}_{i})$.
  Now consider the set $C$ of all tuples
  $(z,s,\beta,t_{0},\dots,t_{n-1}) \in T^{*} \times \P \times
  \omega_{1} \times T^{n}$ such that:
  \begin{itemize}\itemsep0em
    \item $s \in D$ and $s \forces z \in \dt{A}$.
    \item $A_{s} \setm \alpha \neq \es$ and $\min(A_{s} \setm \alpha) =
    \beta$.
    \item $|\dom(f_{s}) \cap T_{\beta}| = n$.
    \item $\dom(f_{s}) \cap T_{\beta} = \{t_{i} : i < n\}$.
    \item For all $i \neq j < n$, $\ck{t}_{i}$ and $t_{j}$
    are $T$-incomparable,
    and $\ck{s}_{i}$ and $f_{s}(t_{j})$ are $S$-incomparable.
    \item For all $i < n$, $g(t_{i}) = p_{i}$ and
    $g(f_{s}(t_{i})) = q_{i}$.
  \end{itemize}
  Note that $(x,r,\delta,\hat{t}_{0},\dots,
  \hat{t}_{n-1}) \in C$. Thus
  $B:= \{z : (\exists s,\beta,\vec{t})((z,s,\beta,\vec{t})
  \in C)\}$
  is nonempty, it is in $N$, and $x \in B$.
  Since $x$ is
  $(N,T^{*})$-generic, there exists $y \in B$ such that
  $y \mathrel{R} x$, so $(y,s,\beta,t_{0},\dots,t_{n-1}) \in C$
  for some $s,\beta,t_{0},\dots,t_{n-1} \in N$.
  Now note that for every $i,j < n$,
  $t_{i}$ is $T$-incomparable with $\hat{t}_{j}$ and
  $f_{s}(t_{i})$ is $S$-incomparable with $f_{r}({\hat{t}}_{j})$;
  if $i \neq j$ this is immediate from the definition of $C$,
  and if $i = j$, then it follows from $\beta < \delta$,
  $g(t_{i}) = g(\hat{t}_{i})$ and $g(f_{s}(t_{i})) = g(f_{r}(\hat{t_{i}}))$,
  again by the definition of $C$.
  Thus Lemma~\ref{lem:compat-cond} implies that
  $s$ is compatible with $r$. By definition of $C$,
  $s \forces y \in \dt{A}$ and $s \in D$. This finishes the proof.
\end{proof}

\begin{corollary}\label{cor:special-iso}
  $\PFA(T^{*})$ implies that
  all special normal Aronszajn trees are club-isomorphic.
\end{corollary}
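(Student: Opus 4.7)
The plan is to invoke $\PFA(T^*)$ on the forcing $\P(T,S)$, whose $T^*$-properness has just been established in the preceding lemma, applied to a suitable family of $\omega_1$ many dense sets whose intersection produces a club isomorphism between $T$ and $S$.

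Given normal special Aronszajn trees $T$ and $S$, I would consider in $\P := \P(T,S)$ the natural dense sets: for each $\alpha < \omega_1$, $D_\alpha := \{p \in \P : \alpha \in A_p\}$; for each $t \in T$, $E_t := \{p \in \P : t \in \dom(f_p)\}$; and for each $s \in S$, $F_s := \{p \in \P : s \in \ran(f_p)\}$. Applying $\PFA(T^*)$ to $\P$ and this family of $\omega_1$ dense sets yields a filter $G \seb \P$ meeting each one. Setting $F := \bigcup_{p \in G} f_p$ and $A := \bigcup_{p \in G} A_p$, the density of the $D_\alpha$'s forces $A = \omega_1$, and the densities of the $E_t$'s and $F_s$'s force $\dom(F) = T$ and $\ran(F) = S$; combined with the structural conditions in the definition of $\P$ (height-preservation and order-preservation), $F$ witnesses that $T$ and $S$ are club-isomorphic, as required.

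The hard part will be the density verification for $E_t$ (the case of $F_s$ being symmetric). To extend an arbitrary condition $p$ to one including a given $t \in T_\alpha$ in its domain, I would add $\alpha$ to $A_p$, add $t$ together with its relevant predecessors to $\dom(f_p)$, extend every other maximal branch of $\dom(f_p)$ up to level $\alpha$ so as to preserve the uniform-length condition on maximal branches, and simultaneously select matching images in $S$ that preserve order along each branch and incomparability across branches, while keeping the resulting $\ran(f_q)$ downwards closed in $S \res A_q$. Normality of $T$ and $S$ provides the needed combinatorial flexibility, since every node has $\aleph_0$ extensions at each higher level, so the finitely many constraints arising from the existing images of $f_p$ can always be accommodated. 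Once this density is verified, the remainder is a routine invocation of $\PFA(T^*)$, since the main technical work has already been carried out in the preceding lemma.
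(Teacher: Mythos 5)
There is a genuine gap, and it sits exactly in the part you call routine. The sets $D_\alpha=\{p:\alpha\in A_p\}$, $E_t=\{p:t\in\dom(f_p)\}$ and $F_s=\{p:s\in\ran(f_p)\}$ are \emph{not} dense in $\P(T,S)$. Concretely, fix $\gamma<\beta$, a node $z\in T_\gamma$, two distinct immediate successors of $z$ extended to $u,u'\in T_\beta$ (so $u\res\gamma=u'\res\gamma=z$), and two nodes $v,v'\in S_\beta$ with $v\res\gamma\neq v'\res\gamma$. Then $p=(\{\beta\},\{(u,v),(u',v')\})$ is a legitimate condition (the definition of $\P(T,S)$ places no requirement on splitting levels; only the compatibility criterion of Proposition~\ref{lem:compat-cond} mentions them). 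Any $q\le p$ with $\gamma\in A_q$ would, by downward closure of $\dom(f_q)$ in $T\res A_q$, have to contain $z$, and since $f_q$ is increasing and height-preserving, $f_q(z)$ would have to equal both $v\res\gamma$ and $v'\res\gamma$, which is impossible. So no extension of $p$ lies in $D_\gamma$, and since $E_t\subseteq D_{\hht_T(t)}$, the sets $E_t$ fail to be dense as well; normality of the trees does not help, because the obstruction comes from images that have already split too low. Your density argument overlooks precisely this forced determination of images of restrictions of nodes already in the domain.

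There is also a conceptual red flag: if your dense sets were dense, $\PFA(T^*)$ would yield a filter meeting all of them, hence a total level-preserving isomorphism $F$ with $\dom(F)=T$ and $\ran(F)=S$, i.e., $T\cong S$. That is strictly stronger than club-isomorphism and is not what this forcing (or $\PFA$) can deliver; this is exactly why Abraham--Shelah's theorem, and the corollary here, are stated with a club. The correct derivation of the corollary from the lemma uses weaker dense sets --- e.g., that $\bigcup_{p\in G}A_p$ is unbounded, and that for each $t\in T$ (and symmetrically each $s\in S$) and each $\alpha<\omega_1$ there are conditions whose domain contains a node above $t$ at some level $\ge\alpha$ --- and then extracts an isomorphism of $T\res C$ with $S\res C$ only on the club $C$ of limit points of the generic level set, using the uniform-branch-length clause in the definition of the conditions to make the limit stages cohere. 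You should rework the final step along those lines rather than trying to make $F$ total.
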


\subsection{Not club-isomorphic almost Suslin trees}
We dedicate this section to show that, for some carefully chosen $T^*$, there is a model  of $\PFA(T^{*})$ in which
not all normal almost Suslin trees are club-isomorphic.  Let us fix
some notation first. If $T$ and $S$ are trees, $S + T$ denotes the
tree given by putting $S$ and $T$ side by side, and $S \otimes T$ the
set $\{(s,t) \in S \times T : \hht_{S}(s) = \hht_{T}(t)\}$ under the
product order, which is again a tree, and it is Aronszajn whenever
both $T$ and $S$ are $\omega_{1}$-trees and at least one of them is
Aronszajn.

Our strategy to obtain the consistency of
$\PFA(T^{*}) \; +$
``not all normal almost Suslin trees are club-isomorphic''
relies on the following lemma.

\begin{lemma}\label{lem:con-reduction}
  Assume there is a supercompact cardinal.
  Assume there are two normal almost Suslin trees, $T$ and $S$, such that
  $T \otimes T$ is again almost Suslin but $S \otimes S$ is not.
  Let $T^*:=(T\otimes T)+S$. Then there is a model of
  $\PFA(T^{*})$ in which $S$ and $T$ are not club isomorphic.
\end{lemma}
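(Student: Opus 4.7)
The plan is a reduction using the standard iteration theorem for $T^{*}$-proper forcings, with the asymmetry between $T \otimes T$ and $S \otimes S$ doing all the work. The clever choice is the tree $T^{*}$ itself: taking the sum $(T \otimes T) + S$ lets one forcing axiom simultaneously protect both the almost-Suslinness of $T \otimes T$ and of $S$. First I would check that $T^{*}$ is indeed an almost Suslin tree in $V$: it is Aronszajn (both summands are, the tensor square because any chain in $T \otimes T$ projects to chains in $T$), and any stationary antichain in $T^{*}$ must concentrate on a single summand, contradicting almost-Suslinness of $T \otimes T$ or of $S$. Invoking the theorem stated at the end of Section~\ref{sec:prelim} (this is where the supercompact hypothesis is used), pass to a generic extension $V[G]$ satisfying $\PFA(T^{*})$.

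Next I would establish the decisive asymmetry in $V[G]$. By the definition of $\PFA(T^{*})$ the iteration is proper and preserves almost-Suslinness of $T^{*}$; hence both $T \otimes T$ and $S$ remain almost Suslin in $V[G]$ (a stationary antichain in either summand would be one in $T^{*}$). On the other hand, a proper forcing preserves stationary subsets of $\al$, so a ground-model stationary antichain $A \seb S \otimes S$ survives into $V[G]$: $A$ is still an antichain, and its height-set is still stationary. Thus $S \otimes S$ is \emph{not} almost Suslin in $V[G]$.

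Finally, I would suppose for contradiction that in $V[G]$ there is a club $C \seb \al$ and a tree isomorphism $\varphi : T \res C \to S \res C$. Tree isomorphisms preserve order-types of predecessor sets, and because $C$ is a club the height of a node in the restricted tree determines its original height; hence $\varphi$ sends $T_{\alpha}$ to $S_{\alpha}$ for every $\alpha \in C$. Then $\psi(t,t') := (\varphi(t),\varphi(t'))$ is a tree isomorphism from $(T \otimes T) \res C$ onto $(S \otimes S) \res C$, and pulling back the stationary antichain of $S \otimes S$ restricted to levels in $C$ (still stationary, as the intersection of a stationary set with a club) produces a stationary antichain of $T \otimes T$, contradicting the previous paragraph. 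The only step requiring genuine care is the preservation argument in paragraph two, but it is essentially forced upon us by the very definition of $\PFA(T^{*})$ together with properness; the serious technical content of the section --- actually constructing such a pair $T,S$ from $\diamondsuit$ --- is the subject of a separate lemma and lies outside the present statement.
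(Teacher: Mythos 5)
Your proposal is correct and follows the paper's proof essentially verbatim: force $\PFA(T^*)$ by Krueger's iteration of $T^*$-proper forcings, note that preservation of the almost-Suslinness of $T^*=(T\otimes T)+S$ keeps $T\otimes T$ almost Suslin while properness preserves the stationary antichain witnessing that $S\otimes S$ is not, and conclude that a club isomorphism of $T$ with $S$ is impossible since it would transfer that antichain to $T\otimes T$. Your last paragraph merely spells out the level-preservation and square-of-isomorphism details that the paper leaves implicit.
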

\begin{proof}
  Suppose that we are in a model in which there are $T$ and $S$
  as described. Let $T^{*} := (T \otimes T) + S$, clearly $T^{*}$ is also
  almost Suslin. Now construct a model of $\PFA(T^{*})$ by iterating
  $T^{*}$-proper forcings (using a Laver function as a bookkeping device)
  as in \cite{Krueger2020}. Since $T^{*}$ remains
  almost Suslin, so do $T$, $S$ and $T \otimes T$, and since stationary
  sets are preserved, $S \otimes S$ is still not almost Suslin. Thus $S$
  and $T$ cannot be club-isomorphic.
\end{proof}

\begin{rmk}\label{rmk:sum-is-enough}
  The tree we will use for $T$ will be a coherent tree. By
  Corollary~\ref{cor:coherent-asuslin} and the fact that coherence is
  preserved by generic extensions preserving $\omega_{1}$, we see that for such
  a $T$ it is enough to consider $T^{*} := T + S$, rather then $T^{*} :=
  (T \otimes T) + S$, in the previous proof.
\end{rmk}

We now show how to get a model in which there are $T$ and $S$
as in Lemma~\ref{lem:con-reduction}.

As mentioned, $T$ will be
a \emph{coherent tree}: an uncountable $T \seb
\omega^{<\omega_{1}}$ that is closed under taking initial segments and
such that for all $t,s \in T$, if $\dom(t) \le \dom(s)$ then $\{\xi <
\dom(t) : t(\xi) \neq s(\xi)\}$ is finite.

\begin{lemma}\label{lem:antichains}
  Let $T$ be a coherent Aronszajn tree,
  $\Gamma \seb \omega_{1}$ a stationary set, and
  $\{(x_{\alpha},y_{\alpha}) :\alpha \in \Gamma\}$ be such that
  $\dom(x_{\alpha}) = \dom(y_{\alpha}) = \alpha$
  for all $\alpha \in \Gamma$. Then there is stationary $\Gamma' \seb \Gamma$
  such that for all $\alpha < \beta$ in $\Gamma'$,
  $x_{\alpha} \tleq x_{\beta}$ iff $y_{\alpha} \tleq y_{\beta}$.
\end{lemma}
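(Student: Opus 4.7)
The plan is to combine the coherence of $T$ with a pressing-down argument to find a stationary $\Gamma' \seb \Gamma$ on which the pointwise differences between $x_\alpha$ and $y_\alpha$ are uniform; once this is in place, the two comparability relations will agree automatically.

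First, for each $\alpha \in \Gamma$ I would set $d_\alpha := \{\xi < \alpha : x_\alpha(\xi) \neq y_\alpha(\xi)\}$. Since $\dom(x_\alpha) = \dom(y_\alpha) = \alpha$ and $T$ is coherent, $d_\alpha$ is a finite subset of $\alpha$. After discarding the harmless case in which $d_\alpha = \es$ stationarily often (where the conclusion is immediate, as then $x_\alpha = y_\alpha$ on a stationary set), the map $\alpha \mapsto \max(d_\alpha) + 1$ is regressive on a stationary subset of $\Gamma$, so Fodor's lemma yields a stationary $\Gamma_0 \seb \Gamma$ and $\gamma_0 < \al$ with $d_\alpha \seb \gamma_0$ for every $\alpha \in \Gamma_0$.

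Next, since $\gamma_0$ is countable, successive pigeonhole arguments produce a stationary $\Gamma_2 \seb \Gamma_0$, a fixed finite $d \seb \gamma_0$, and fixed functions $\sigma,\tau \colon d \to \omega$ such that for every $\alpha \in \Gamma_2$ one has $d_\alpha = d$, $x_\alpha \res d = \sigma$, and $y_\alpha \res d = \tau$. I would then take $\Gamma' := \Gamma_2$.

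To verify the conclusion, take $\alpha < \beta$ in $\Gamma'$, noting that $d \seb \alpha$. For $\xi \in d$ one has $x_\alpha(\xi) = \sigma(\xi) = x_\beta(\xi)$ and $y_\alpha(\xi) = \tau(\xi) = y_\beta(\xi)$, so no disagreement arises on $d$. For $\xi < \alpha$ with $\xi \notin d$, the identities $x_\alpha(\xi) = y_\alpha(\xi)$ and $x_\beta(\xi) = y_\beta(\xi)$ hold by definition of $d_\alpha = d_\beta = d$, hence $x_\alpha(\xi) = x_\beta(\xi)$ iff $y_\alpha(\xi) = y_\beta(\xi)$. Combining, $x_\beta \res \alpha = x_\alpha$ iff $y_\beta \res \alpha = y_\alpha$, i.e., $x_\alpha \tleq x_\beta$ iff $y_\alpha \tleq y_\beta$. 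The argument is purely combinatorial and the sole essential hypothesis is coherence, which is what makes $d_\alpha$ finite so that Fodor applies; I do not foresee a substantive obstacle.
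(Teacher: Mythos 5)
Your proof is correct and follows essentially the same route as the paper: coherence makes the difference set of $x_\alpha$ and $y_\alpha$ finite, Fodor's lemma bounds it uniformly below some $\gamma_0$, a countable pigeonhole uniformizes, and then comparability transfers. The only (immaterial) difference is that the paper stabilizes the full initial segments $x_\alpha\res\gamma$, $y_\alpha\res\gamma$ in the countable level $T_\gamma$ and exhibits the explicit isomorphism $x^{\frown}t\mapsto y^{\frown}t$, whereas you stabilize just the finite difference pattern and verify the equivalence pointwise; also note that your regressivity of $\alpha\mapsto\max(d_\alpha)+1$ implicitly requires restricting to limit $\alpha$ (as the paper does), and your initial case split for $d_\alpha=\es$ is unnecessary since the empty pattern is handled by the same pigeonhole.
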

\begin{proof}
  We may assume that $\Gamma$ consists of limit ordinals.
  By coherence, for each $\alpha \in \Gamma$ there is
  $\gamma(\alpha) < \alpha$ such that
  $x_{\alpha} \res [\gamma(\alpha),\alpha) =
  y_{\alpha} \res [\gamma(\alpha),\alpha)$.
  Using Fodor's lemma, there
  is a stationary set $\Gamma' \seb \Gamma$ and
  $\gamma < \min(\Gamma')$ such that $\gamma(\alpha) = \gamma$ for
  all $\alpha \in \Gamma'$. Since $T_{\gamma}$ is countable,
  there are $x,y \in T_{\gamma}$ such that
  $\Gamma'' :=\{\alpha \in \Gamma' : x \tleq x_{\alpha}, y \tleq y_{\alpha}\}$
  is stationary. Then $x^{\frown}t \mapsto y^{\frown}t$ is a $\tleq$-isomorphism
  from $\{x_{\alpha} : \alpha \in \Gamma''\}$ onto
  $\{y_{\alpha} : \alpha \in \Gamma''\}$
\end{proof}

Note that the previous lemma
easily generalize to  finite tuples,
and thus the following Corollary also holds for any finite product $T
\otimes \cdots \otimes T$.

\begin{corollary}\label{cor:coherent-asuslin}
  If $T$ is a coherent Aronszaj tree, then
  $T$ is almost Suslin iff $T\otimes T$ is almost Suslin.
\end{corollary}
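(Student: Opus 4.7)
The plan is to prove the two directions of the biconditional separately, with almost all the work going into the nontrivial implication, which follows quickly from Lemma \ref{lem:antichains}.

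For the easy direction, I would argue that if $T \otimes T$ is almost Suslin then so is $T$, by contraposition: given a stationary antichain $A \subseteq T$, the diagonal set $\{(a,a) : a \in A\}$ sits in $T \otimes T$ (since both coordinates have the same height), is an antichain (if $(a,a) \le (b,b)$ in the product order then $a \le b$), and its levels form the same stationary set of ordinals as the levels of $A$.

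For the substantive direction, assume $T$ is almost Suslin and, toward contradiction, suppose $T \otimes T$ has a stationary antichain. Enumerate it as $\{(x_\alpha, y_\alpha) : \alpha \in \Gamma\}$, where $\Gamma$ is stationary and $\dom(x_\alpha) = \dom(y_\alpha) = \alpha$ for each $\alpha \in \Gamma$. Apply Lemma \ref{lem:antichains} to obtain a stationary $\Gamma' \subseteq \Gamma$ on which $x_\alpha \tleq x_\beta$ iff $y_\alpha \tleq y_\beta$ whenever $\alpha < \beta$ are both in $\Gamma'$. For any such $\alpha < \beta$, incomparability of $(x_\alpha, y_\alpha)$ and $(x_\beta, y_\beta)$ in the product forces $x_\alpha \not\tleq x_\beta$ or $y_\alpha \not\tleq y_\beta$; by the equivalence, in fact both must fail, so $x_\alpha$ and $x_\beta$ are $\tleq$-incomparable in $T$. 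Since the heights $\{\alpha : \alpha \in \Gamma'\}$ exhaust a stationary set and the $x_\alpha$ live at distinct levels, $\{x_\alpha : \alpha \in \Gamma'\}$ is a stationary antichain in $T$, contradicting that $T$ is almost Suslin.

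There is essentially no obstacle here, since Lemma \ref{lem:antichains} already encapsulates the coherence-based Fodor argument that converts a putative antichain in the product into one in the base tree. The only minor bookkeeping is to notice, as the author's remark following Lemma \ref{lem:antichains} indicates, that the same proof (applied to pairs of tuples rather than pairs of elements) would let one upgrade the statement to finite products $T \otimes \cdots \otimes T$ if desired, which is precisely what is invoked in Remark \ref{rmk:sum-is-enough}.
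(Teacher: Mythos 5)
Your proof is correct and follows exactly the route the paper intends: the corollary is stated as an immediate consequence of Lemma~\ref{lem:antichains}, and your argument (diagonal embedding for the easy direction, Lemma~\ref{lem:antichains} plus the height mismatch for the hard direction) fills in precisely those details. The only step you gloss over is the trivial thinning of the stationary antichain in $T\otimes T$ to one pair per level so that the enumeration $\{(x_\alpha,y_\alpha):\alpha\in\Gamma\}$ with $\dom(x_\alpha)=\dom(y_\alpha)=\alpha$ applies, which is harmless.
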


\begin{definition}
  For a tree $S$, its \emph{derived trees of dimension $n$}
  are the trees of the form $S(x_{1}) \otimes
  \dots S(x_{n})$ where $(x_{1},\dots,x_{n})$ are distinct elements of
  the same height, and $S(x_{i}) := \{s \in S : x_{i} \le_{S} s\}$.
\end{definition}

It is a standard fact that $\diamondsuit$ implies the existence
of a coherent Suslin tree. See for example \cite{Todorcevic1984}.
By Corollary~\ref{cor:coherent-asuslin},
to obtain our desired model it is enough find a model of
$\diamondsuit$ plus the existence of a Suslin tree
$S$ such that all its derived trees of dimension $2$ are special;
this implies that $S \otimes S$ is not almost Suslin.
One way to obtain such a model is to start with a model
of $\diamondsuit$ and use Krueger's forcing
\cite[Theorem 4.4]{Krueger2020suslin} to introduce such a tree $S$.
This works because Krueger's forcing has size $\omega_{1}$ (in a model
of $\CH$), and thus it preserves $\diamondsuit$. Other option,
which seems more natural to us, is to show that $\diamondsuit$
already implies the existence of a tree like $S$. We take
this approach.

Jensen and Johnsbr{\r a}ten \cite{Jen} used $\diamondsuit$ to
construct a Suslin tree $S$ with the property that $\{(s,t) \in S
\otimes S : s \neq t\}$ is special, as mentioned in \cite[9.15
(vi)]{Todorcevic1984}.  In particular, it follows that every derived
tree of dimension at least $2$ is special.  However, their original
proof is quite involved, and does not explicitly stablish this
fact. For clarity and completeness, we present a variant of their
construction. Our approach yields a proof of a stronger statement,
confirming a conjecture described as ``quite plausible'' by
Scharfenberger-Fabian in \cite[Sec3.6]{Scharfenberger2010}.

\begin{theorem}\label{thm:diamond-S}
  Assume $\diamondsuit$. For every $1 \le d < \omega$,
  there is a normal Suslin tree
  $S$ such that:
  \begin{itemize}
    \item All its derived trees of dimension at most $d$ are
    Suslin.
    \item All its derived trees of dimension greater than $d$
    are special.
  \end{itemize}

\end{theorem}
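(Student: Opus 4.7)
The plan is to adapt the classical Jensen--Johnsbr{\r a}ten construction, building $S\seb\omega^{<\al}$ as a coherent, normal Suslin tree by recursion on the levels with $\diamondsuit$ as an oracle, but with additional bookkeeping to control derived trees of each dimension separately. Simultaneously with $S$, for each $n>d$ we construct a function $F_n\colon D_n\to\omega$, where $D_n$ is the set of $n$-tuples of pairwise distinct nodes of $S$ lying at a common level, having the property that $F_n(\vec{t})\ne F_n(\vec{s})$ whenever $\vec{t},\vec{s}\in D_n$ with $\vec{t}\le\vec{s}$ coordinatewise and $\vec{t}\ne\vec{s}$. Restricted to any derived tree of dimension $n$, the map $F_n$ is a specialization into $\omega$-many antichains, witnessing that this derived tree is special.

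Successor stages are routine: we add immediate successors $t^{\frown}k$ for $k<\omega$ to each $t$ at the previous level, and each new $n$-tuple has a unique $\le$-predecessor at the preceding level, so the extension of $F_n$ reduces to choosing a color distinct from a single previously assigned value. All the action is concentrated at a limit stage $\alpha$, where we must choose a countable collection of cofinal ``threads'' through $S\res\alpha$ to be capped by new nodes at level $\alpha$, and simultaneously extend each $F_n$ to the newly formed $n$-tuples at level $\alpha$.

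For the low-dimensional requirement we use the standard sealing trick: when $\diamondsuit$ predicts a subset $A\seb (S\res\alpha)^k$ with $k\le d$ that happens to be a maximal antichain in some derived tree of dimension $k$, we arrange that every $k$-tuple of threads we extend at level $\alpha$ has a $\le$-ancestor in $A$. Coherence guarantees enough density among the available threads for this sealing to be realizable for all $k\le d$ simultaneously; that $S$ itself is Suslin is just the case $k=1$.

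The main obstacle, and the source of the technical novelty over the standard construction, is the extension of each $F_n$ at limit stages. For a newly formed tuple $(s_1,\dots,s_n)$ at level $\alpha$, the set of forbidden colors $\{F_n(s_1\res\beta,\dots,s_n\res\beta):\beta<\alpha,\ (s_1\res\beta,\dots,s_n\res\beta)\in D_n\}$ is a priori countable and, by injectivity of $F_n$ along the predecessor chain, could equal all of $\omega$. To overcome this we tightly couple the color assignment with the thread selection: at each limit we fix in advance, guided by $\diamondsuit$, a target coloring for the potential new tuples, and then select which threads to extend so that the forbidden set of each new tuple leaves cofinitely many colors available. Reconciling these demands with the sealing requirements for dimensions $\le d$ and carrying them out uniformly for all $n>d$ is the delicate heart of the argument, following closely the strategy of Jensen--Johnsbr{\r a}ten but with explicit separation between dimensions.
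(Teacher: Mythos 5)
There is a genuine gap, and in fact the very first design choice is inconsistent with the statement you are trying to prove. You propose to build $S$ as a \emph{coherent} normal Suslin tree, and you later lean on coherence to make the limit-stage sealing work. But by Lemma~\ref{lem:antichains} and its finite-tuple generalization (Corollary~\ref{cor:coherent-asuslin}), a coherent almost Suslin tree has all of its finite powers, and hence all of its derived trees of every dimension, almost Suslin; on the other hand, a special $\omega_{1}$-tree whose levels are nonempty always has a stationary antichain (choose a node on each level and apply the pigeonhole/Fodor argument to the countably many antichains). So a coherent Suslin tree can never have a derived tree of dimension $d+1$ special, for any $d \ge 1$. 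In the paper the coherent tree plays the role of $T$ in Lemma~\ref{lem:con-reduction} and Remark~\ref{rmk:sum-is-enough}; the tree $S$ of Theorem~\ref{thm:diamond-S} is a (necessarily non-coherent) Jensen--Johnsbr{\r a}ten-style tree, so the ``density among threads'' you wanted coherence to provide is not available and must be arranged by hand.

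The second, and central, gap is the limit-stage extension of your $\omega$-valued colorings $F_{n}$, which is exactly the part you defer to ``the delicate heart of the argument.'' For a new $n$-tuple capping threads at a limit level $\alpha$, its restrictions to levels below $\alpha$ form an infinite chain on which $F_{n}$ is injective, so the forbidden set of colors is always infinite; ``leaves cofinitely many colors available'' is impossible, and even guaranteeing one available color requires an inductive extension hypothesis maintained through every limit stage and compatible with the $\diamondsuit$-sealing in dimensions $\le d$, where specific coordinates must be steered above prescribed antichain elements. Your sketch names this tension but does not resolve it. The paper's proof takes a structurally different route: instead of $\omega$-colorings it carries a strictly increasing map $f$ into $\Q_{\ge 0}$ on injective $(d+1)$-tuples, together with the quantitative extension property (iii) --- any injective $n$-tuple ($n \ge d$) extends to any higher level with $d$ prescribed coordinates while every $(d+1)$-subtuple value increases by less than a given $\varepsilon$ --- and it only needs to treat dimensions exactly $d$ (Suslin) and exactly $d+1$ (special). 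At a limit $\delta$ all requirements are realized simultaneously by taking a filter generic over a countable $N \prec H(\omega_{2})$ for an auxiliary poset $\P_{\delta}$ whose conditions consist of finite tuples below $\delta$ together with rational bounds $h$; the generic branches give the nodes of $S_{\delta}$ (yielding normality and the sealing condition (B2) predicted by $\diamondsuit$), and the bounds $h$ give the values of $f$ at level $\delta$ while preserving (i)--(iii). Without an analogue of property (iii), or some other concrete mechanism guaranteeing that your specializing functions extend at limits while the maximal antichains are sealed, your outline does not yet constitute a proof.
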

\begin{proof}
  Fix $d \ge 1$. First observe  that it is enough to make sure that all derived
  trees of dimension exactly $d$ are Suslin, and all derived trees
  of dimension exactly $d+1$ are special.

  Let us fix some notation. For $1 \le n < \omega$, let
  $S^{(n)}$ denote the $\bar{x} \in \bigotimes_{i < n}S$ that
  are injective, i.e., without repetitions. Similarly
  define $S^{(n)}_{\alpha}$. We also understand that
  if $\bar{x} \in S^{(n)}$, then $\bar{x} = (x_{0},\dots,x_{n-1})$.
  We order $S^{(n)}$ with the partial order induced
  from $\bigotimes_{i<_{n}}S$, which we will denote simply
  by $<_{S}$.

  Fix $\tup{A_{\delta} : \delta < \omega_{1}}$ a $\diamondsuit$-sequence
  for subsets of $\omega_{1}^{d}$.
  We will construct $S$ by recursion on the levels, making
  sure that for each $\alpha < \omega_{1}$,
  $S\res \alpha$ is a countable ordinal as a set.
  Together with $S$ we will define
  $f : \bigotimes_{i < d+1}S \to \Q_{\ge 0}$ satisfying the following:
  \begin{enumerate}[(i)]
    \item $f(\bar{x}) = 0$ iff $\bar{x}$ is not injective, i.e.,
    $\bar{x} \notin S^{(d+1)}$.
    \item For all $\bar{x},\bar{y} \in S^{(d+1)}$,
    if $\bar{x} <_{S} \bar{y}$ then $f(\bar{x}) < f(\bar{y})$.
    \item Suppose $\alpha < \beta$, and
    $\bar{x} \in S_{\alpha}^{(n)}$ with $n \ge d$. Then for all
    $I \in [n]^{d}$ and $\sigma : I \to S_{\beta}$ such that
    for all $i \in I$, $x_{i} <_{S} \sigma(i)$, and for
    all $\varepsilon \in \Q^{+}$, there is
    $\bar{y} \in S_{\beta}^{(n)}$ such that $\bar{x} <_{S} \bar{y}$,
    $y_{i} = \sigma(i)$ for $i \in I$,
    and for all $i_{0},\dots,i_{d} < n$ without
    repetitions,
    $f(x_{i_{0}},\dots,x_{i_{d}}) < f(y_{i_{0}},\dots,y_{i_{d}})
    < f(x_{i_{0}},\dots,x_{i_{d}}) + \varepsilon$.
  \end{enumerate}
  Note that property (ii) will ensure that all derived trees of
  dimension $d+1$ are special.

  Let $S_{0} := \{0\}$ and given
  $S_{\alpha}$, construct $S_{\alpha+1}$ by adding $\aleph_{0}$
  immediate successors to each node in $S_{\alpha}$ so that
  (i)--(iii) are satisfied. Now
  suppose $S \res \delta$
  has been defined for some countable limit ordinal satisfying (i)--(iii),
  and all the normality requirements. We will construct
  a countable set $B$ of cofinals branches through $S \res \delta$
  satisfying the following:
  \begin{itemize}
    \item[(B1)] For every $x \in S \res \alpha$ there is $b \in B$ such
    that $x \in B$.
    \item[(B2)] If it happens that $A_{\delta}$ is a maximal
    antichain of $S(x_{0}) \otimes \cdots \otimes S(x_{d-1})$ for
    some $\bar{x} \in (S\res \delta)^{(d)}$, then for every
    $b_{0},\dots,b_{n-1} \in B$ such that for all $i < d$,
    $x_{i} \in b_{i}$, $\bigotimes_{i < d}b_{i} \cap A_{\delta} \neq \es$.
  \end{itemize}
  We will define $S_{\delta}:= \{\tilde{b} : b \in B\}$ where for every
  $b \in B$, $\tilde{b}$ is a node above each element of $b$.
  This will ensure that the resulting tree is normal,
  and has every derived tree of dimension $d$ Suslin. Thus,
  it is enough to see how to construct $B$ satisfying (B1) and (B2),
  and how to extend $f$ on $S_{\delta}$ satisfying (i)--(iii).

  Define $\P_{\delta}$ to be the poset given by the
  $(\bar{a},h)$ such that for some $n \in \omega$,
  \begin{itemize}
    \item $\bar{a} \in \bigotimes_{i < n}S$. We call $n$ its \emph{length}
    and denote it by $|\bar{a}|$.
    Observe that we do not require $\bar{a}$ to be injective.
    \item $h : n^{d} \to \Q^{+}$ is such that for all
    $(i_{0},\dots,i_{d}) \in \dom(h)$,
    $f(a_{i_{0}},\dots,a_{i_{d}}) < h(i_{0},\dots,i_{d})$.
  \end{itemize}
  The order is given by $(\bar{b},g) \le (\bar{a},h)$ if
  $|\bar{b}| \ge |\bar{a}|$, $\hht(\bar{b}) \ge \hht(\bar{a})$,
  for all $i < |a|$, $b_{i} \ge_{S} a_{i}$, and
  $g \supseteq h$.

  Let $N \prec H(\omega_{2})$ be countable and such that
  $\delta,S\res\delta,A_{\delta},\P_{\delta} \in N$. Let
  $G$ be a $\P_{\delta}$-generic filter for $N$. For each
  $i < \omega$ let $b_{i} := \{a_{i} : (\bar{a},h) \in G, |\bar{a}| > i\}$.

  We claim that for each $i < \omega$, $b_{i}$ is a cofinal branch
  through $S \res \delta$, and moreover, if
  $B := \{b_{i} : i< \omega\}$, then  we have that (B1) and (B2) hold.
  To see that each $b_{i}$ is a cofinal branch,
  note that for each $i < \omega$
  and $\alpha < \delta$, the set of $(\bar{a},h) \in \P_{\delta}$ such that
  $|\bar{a}| > i$ and $\hht(\bar{a}) \ge \alpha$, is dense and it
  is definable from parameters in $N$, thus it is in $N$, so
  $G$ meets it. Since the  verification that the  clauses  (B1) and (B2) hold, are similar.
  We will only present the proof that (B2) holds. Suppose that $A_{\delta}$ is a maximal
  antichain of $\bigotimes_{i < d}S(x_{i})$, where
  $\bar{x} \in (S\res \delta)^{(n)}$. For each $x_{i}$, let
  $b_{k(i)}$ be such that $x_{i} \in b_{k(i)}$. Observe that
  $k$ is injective since $\bar{x}$ is injective. Now consider the
  set of $(\bar{a},h) \in \P_{\delta}$ such that
  $|\bar{a}| > \sup_{i < d}k(i)$,
  and either $(a_{k(0)},\dots,a_{k(d-1)}) \in A_{\delta}$ or
  it does not extend $\bar{x}$. Then this set is dense by (iii) in
  $S \res \delta$,
  and it is in $N$, thus $G$ meets it. This easily implies what we want.

  It remains to define $f$ on $S_{\delta}$. Call
  $\tilde{b}_{i}$ the node in $S_{\delta}$ above $b_{i}$.
  For every injective $(i_{0},\dots,i_{d})$ we define
  $f(\tilde{b}_{i_{0}},\dots,\tilde{b}_{i_{d}}) := h(i_{0},\dots,i_{d})$
  where $(\bar{a},h) \in G$ is any such that $|\bar{a}| > \sup_{i \le d}i_{d}$.
  This is well defined because $G$ is a filter. Of course if $(i_{0},\dots,i_{d})$
  is not injective we let $f(\tilde{b}_{i_{0}},\dots,\tilde{b}_{i_{d}}) = 0$.
  It follows
  from the definition of $\P_{\delta}$ that this will satisfy
  (i) and (ii). We need to show (iii).

  Pick $\bar{x} \in (S\res \delta)^{n}$. We may assume $n \ge d+1$,
  since otherwise (iii) holds vacuously. Pick $I \in [n]^{d}$,
  $\sigma : I \to \omega$ and $\varepsilon \in \Q^{+}$.
  Consider $D \seb \P_{\delta}$ the set of $(\bar{a},h)$ satisfying that
  \begin{itemize}
    \item $\ran(\sigma) \seb |\bar{a}|$, $\hht(\bar{a}) > \hht(\bar{x})$ and
    $\bar{a}$ is injective.
    \item If it is the case that for all $i \in I$, $x_{i} <_{S} \sigma(i)$,
    then there is $\sigma' : n \to \omega$ such that:
    \begin{itemize}
      \item $\ran(\sigma') \seb |\bar{a}|$ and $\sigma' \res I = \sigma$.
      \item For all injective $(i_{0},\dots,i_{d}) \in n^{d+1}$,
  $h(a_{\sigma'(i_{0})},\dots,a_{\sigma'(i_{d})}) < f(x_{i_{0}},\dots,x_{i_{d}}) + \varepsilon$.
    \end{itemize}
  \end{itemize}
  An application if (iii) on $S \res \delta$ shows that $D$ is dense.
  This easily implies that (iii) holds on $S \res (\delta +1)$ so
  we are done.
\end{proof}

By Lemma~\ref{lem:con-reduction} and Remark~\ref{rmk:sum-is-enough}
we obtain the following.

\begin{corollary}\label{cor:con-non-iso}
  Assuming $\diamondsuit$ there is a Suslin tree $T^{*}$ and a model of $\PFA(T^{*})$ which contains two normal
  almost Suslin trees that are not club-isomorphic.
\end{corollary}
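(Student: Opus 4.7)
The plan is to combine Theorem~\ref{thm:diamond-S} with the standard $\diamondsuit$-construction of a coherent Suslin tree to supply the hypothesis of Lemma~\ref{lem:con-reduction}, and then reduce the auxiliary tree via Remark~\ref{rmk:sum-is-enough}.

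First, from $\diamondsuit$ I would invoke the classical construction (see \cite{Todorcevic1984}) to obtain a normal coherent Suslin tree $T$. Being Suslin, $T$ is in particular almost Suslin, and Corollary~\ref{cor:coherent-asuslin} then yields that $T \otimes T$ is almost Suslin as well.

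Second, I would apply Theorem~\ref{thm:diamond-S} with $d = 1$ to obtain a normal Suslin tree $S$ all of whose derived trees of dimension $2$ are special. Fix any pair $x \neq y$ of nodes of $S$ at a common level; since $S$ is Suslin, a chain in $S(x) \otimes S(y)$ projects to a chain in $S$ and is therefore countable, so $S(x) \otimes S(y)$ is a special Aronszajn tree. The key auxiliary observation, which I would verify directly, is that every normal special Aronszajn tree carries a stationary antichain: if $g$ is a specialization function, then the antichains $A_n := g^{-1}(n)$ cover the tree, so their level-sets $H_n := \{\hht(z) : z \in A_n\}$ cover a cofinite tail of $\omega_1$; since a countable union of non-stationary sets is non-stationary, some $H_n$ must be stationary, and choosing one representative in $A_n$ per height in $H_n$ produces the desired stationary antichain. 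Transporting this back to $S \otimes S$ via the inclusion $S(x) \otimes S(y) \subseteq S \otimes S$ (which preserves antichains and heights up to a countable shift) shows that $S \otimes S$ fails to be almost Suslin.

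Finally, I would apply Lemma~\ref{lem:con-reduction} to the pair $(T, S)$ and invoke Remark~\ref{rmk:sum-is-enough} to replace $(T \otimes T) + S$ by the simpler $T^{*} := T + S$. The resulting model of $\PFA(T^{*})$ preserves that $T$ and $S$ are almost Suslin, and since coherence of $T$ is preserved by generic extensions preserving $\omega_1$, Corollary~\ref{cor:coherent-asuslin} ensures $T \otimes T$ is still almost Suslin there; meanwhile $S \otimes S$ retains its stationary antichain, so $T$ and $S$ cannot be club-isomorphic. The main delicate point is the auxiliary fact that a special Aronszajn tree must carry a stationary antichain; once that is in hand, the corollary follows by direct assembly of the earlier results.
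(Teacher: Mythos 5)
Your proposal is correct and follows essentially the same route as the paper: a coherent Suslin tree from $\diamondsuit$ together with Corollary~\ref{cor:coherent-asuslin} supplies $T$, Theorem~\ref{thm:diamond-S} with $d=1$ supplies $S$, and the conclusion is assembled via Lemma~\ref{lem:con-reduction} and Remark~\ref{rmk:sum-is-enough}. The only difference is that you spell out the (standard) fact that a special Aronszajn tree carries a stationary antichain, which the paper leaves implicit when asserting that $S\otimes S$ is not almost Suslin; your verification of it is correct.
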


\section{Concluding remarks and questions}\label{sec:questions}

Can Corollary~\ref{cor:special-iso} be improved to ``$\PFA(T^{*})$ implies
that all normal Aronszajn trees without almost-Suslin
subtrees are club-isomorphic'' for some reasonable definition of
subtree? This is motivated by Krueger's result saying that under
$\PFA(S)$ all normal Aronszajn trees without Suslin subtrees are
club-isomorphic (see \cite{Krueger2025}). Note that a positive answer
would be somewhat optimal because of the following fact: if there is an
almost-Suslin tree, then there is a family of size $2^{\aleph_{1}}$ of
pairwise not club-isomorphic normal Aronszajn trees, each of which has
a stationary antichain but contains an almost Suslin subtree
(see \cite[5.15 (v)]{Todorcevic1984}).

In view of Corollary~\ref{cor:con-non-iso}, is it consistent to have an
an almost Suslin tree $T^{*}$ such that after forcing to
obtain $\PFA(T^{*})$ all almost Suslin trees are club-isomorphic?

\bibliographystyle{alpha}

\begin{thebibliography}{}

\bibitem[AS85]{ShelahAbraham}
U.~Abraham and S.~Shelah.
\newblock Isomorphism types of {A}ronszajn trees.
\newblock {\em Israel J. Math.}, 50(1-2):75--113, 1985.

\bibitem[JJ74]{Jen}
Ronald~B. Jensen and H{\r a}vard Johnsbr{\r a}ten.
\newblock A new construction of a non-constructible {$\Delta\sb{3}\sp{1}$}\
  subset of {$\omega $}.
\newblock {\em Fund. Math.}, 81:279--290, 1974.

\bibitem[Kru20a]{Krueger2020suslin}
John Krueger.
\newblock Entangledness in {S}uslin lines and trees.
\newblock {\em Topology Appl.}, 275:107157, 19, 2020.

\bibitem[Kru20b]{Krueger2020}
John Krueger.
\newblock A forcing axiom for a non-special {A}ronszajn tree.
\newblock {\em Ann. Pure Appl. Logic}, 171(8):102820, 23, 2020.

\bibitem[Kru25]{Krueger2025}
John Krueger.
\newblock Suslin tree preservation and club isomorphisms.
\newblock {\em J. Symb. Log.}, 90(1):298--309, 2025.

\bibitem[KT20]{KuzeljavicTodorcevic2020}
Borisa Kuzeljevic and Stevo Todorcevic.
\newblock P-ideal dichotomy and a strong form of the {S}uslin {H}ypothesis.
\newblock {\em Fund. Math.}, 251(1):17--33, 2020.

\bibitem[LHS25]{Lambie2025}
Chris Lambie-Hanson and {\v{S}}{\'a}rka Stejskalov{\'a}.
\newblock Guessing models, trees and cardinal arithmetic.
\newblock {\em Israel Journal of Mathematics}, Jun 2025.

\bibitem[Moh21]{Moh2021}
Rahman Mohammadpour.
\newblock Almost strong properness.
\newblock {\em Proc. Amer. Math. Soc.}, 149(12):5359--5365, 2021.

\bibitem[Moo05]{Moore2005}
Justin~Tatch Moore.
\newblock Set mapping reflection.
\newblock {\em J. Math. Log.}, 5(1):87--97, 2005.

\bibitem[SF10]{Scharfenberger2010}
Gido Scharfenberger-Fabian.
\newblock Optimal matrices of partitions and an application to {S}ouslin trees.
\newblock {\em Fund. Math.}, 210(2):111--131, 2010.

\bibitem[Tod84]{Todorcevic1984}
Stevo Todor\v{c}evi\'c.
\newblock Trees and linearly ordered sets.
\newblock In {\em Handbook of set-theoretic topology}, pages 235--293.
  North-Holland, Amsterdam, 1984.

\bibitem[Tod11]{Todorcevic2011}
S.~Todorcevic.
\newblock Forcing with a coherent suslin tree.
\newblock {\em preprint}, 2011.

\end{thebibliography}

\end{document}